   \numberwithin{equation}{section}
\journal{ } % ÐÞ¸ÄÔÓÖ¾Ãû
\newtheorem{thm}{Theorem}[section]
\newtheorem{lem}[thm]{Lemma}
\newtheorem{defn}[thm]{Definition}
\newtheorem{rem}[thm]{Remark}
\begin{document}
\begin{frontmatter}
\author{Tong Wu$^{a}$}
\ead{wut977@nenu.edu.cn}
\author{Yong Wang$^{b,*}$}
\ead{wangy581@nenu.edu.cn}
\cortext[cor]{Corresponding author.}
\address{$^a$Department of Mathematics, Northeastern University, Shenyang, 110819, China}
\address{$^b$School of Mathematics and Statistics, Northeast Normal University,
Changchun, 130024, China}

\title{The linear perturbation of the metric and the bimetric conformal invarints}
\begin{abstract}
In this paper, we give a method to construct bimetric conformal invarints by the linear metric perturbations and the conformal invarints. And we compute the metric perturbations of the Connes conformal invarints and the conformal Laplacian. As corollaries, some new bimetric conformal invarints on 4-dimensional Riemannian manifolds without boundary are obtained and we get the first order and second order variations of the Connes conformal invarints.
\end{abstract}
\begin{keyword}Bimetric conformal invarints; the linear metric perturbations; the Connes conformal invarints; the conformal Laplacian.

\end{keyword}
\end{frontmatter}
\section{Introduction}
In recent years, the application of perturbation theory in fundamental physical problems has attracted widespread attention. Perturbation theory is a mathematical tool used to solve complex problems that are difficult or impossible to solve directly. The core idea of this method is to decompose the complex problem into two parts: a solvable exact problem and a small perturbation. By introducing a small perturbation on the basis of the exact solution, perturbation theory can gradually unfold the solution, thereby providing an approximate but sufficiently precise result\cite{B1}.

Within the framework of General Relativity, perturbation theory holds indispensable value for dissecting and comprehending dynamic processes, and it has now become an efficient and logically consistent supplementary tool for complete numerical relativity simulations \cite{S1}. Perturbation theory is particularly applied to investigate the stability characteristics of various solutions, including black hole spacetimes\cite{Ch2}, cosmological solutions\cite{Mi2}, and critical point solutions\cite{G2}, among others. Furthermore, this theory assists us in examining potential gauge instabilities\cite{K1}, constraint violations\cite{G3}, and other forms of instability within the different formulations of the Einstein equations implemented in numerical relativity, as numerical errors can essentially be regarded as distortions of the solution being computed. Some metric perturbations of general relativity have been also studied \cite{G1,O1,H1}. In \cite{G1}, Garc\'{\i}a-Bellido J and Wands D studied the metric perturbations produced during inflation in models with two scalar fields evolving simultaneously. In \cite{O1}, Okounkova M, Scheel M A and Teukolsky S A calculated the NC corrections to the metric perturbations around the Schwarzschild blak hole. In \cite{H1}, Herceg N, Juri T, Samsarov A, et al. evolved the leading-order metric perturbation in order-reduced dynamical Chern-Simons gravity. In the previous articles, it was more about the metric perturbations in a physical environment. In this paper, we introduce the metric perturbation into the mathematical environment and further study it.

On the other hand, Connes discussed the conformal aspect of noncommutative
geometry and computed the value of the differential form $\Omega(f_1,f_2)$ at $x$, in terms of $f_1,f_2$ and the conformal structure of $\Sigma$. Ugalde studied the differential form $\Omega_{n,S}$ for the case $(B,S)=(\mathcal{H},F)$ and constructed a conformal invariant differential operator of order the dimension of the manifold for an even dimensional, compact, conformal manifold without boundary in \cite{UW2} and \cite{UW1}. In \cite{B2}, Bochniak A discussed a class of doubled geometry models with diagonal metrics and formulated a hypothesis that supports treating them as modified bimetric gravity theories. In \cite{B3}, Bochniak A and Sitarz A proposed an effective gravity action that couples the two metrics in a similar manner as in bimetric theory of gravity and analyzed whether standard solutions with identical metrics are stable under small perturbations. Therefore, it is a natural question to constrct bimetric conformal invariants. {\bf The main innovation} in this paper is to construct bimetric conformal invariants and study the Connes conformal invarints and the conformal laplacian by the metric perturbations.

\indent The organization of this paper is as follows. In Section \ref{section:2}, we give basic concepts about the conformal invariant, and construct bimetric conformal invariants by the metric perturbations on compact manifolds. In Section \ref{section:3}, we compute the metric perturbations of the Connes conformal invarints and the conformal laplacian in natural frame. In Section \ref{section:4}, the first order and second order variations of the Connes conformal invarints are obtained. Finally, some complex coefficients are listed in the Appendix.
%%%%%%%%%%µÚ¶þ²¿·Ö%%%
\section{Bimetric Conformal Invariants on Compact Manifolds}
\label{section:2}
In this section, we introduce basic concepts about the conformal invariants and define the bimetric conformal invariants by the metric perturbations on compact manifolds.
Consider an n-dimensional compact Riemannian manifold $M$ and $g$ is Riemannian metric on the tangent bundle $TM$ of $M$, and let $e_1,e_2,\cdot\cdot\cdot,e_n$ be local orthonormal basis on $M$, $e_1^*,e_2^*,\cdot\cdot\cdot,e_n^*$ be the dual basis. We begin with the following definition
\begin{defn}\label{def1}We call that the function $F_g$ is the conformal invariant, if $F_g=F_{fg},$ for $f\in C^\infty(M)$ and $f>0.$
\end{defn}
Take the linear perturbation of the metric $g$, that is $g=\bar{g}+\varepsilon\bar{\bar{g}}$, $\forall \varepsilon>0,$ where $\bar{g}$ and $\bar{\bar{g}}$ are Riemannian metrics on the tangent bundle $TM$ of $M$. Let $F_g$ be a conformal invariant, and $F_{\bar{g}+\varepsilon\bar{\bar{g}}}=F_{\bar{g}}+F^{1}_{(\bar{g},\bar{\bar{g}})}\varepsilon+F^{2}_{(\bar{g},\bar{\bar{g}})}\varepsilon^2+\cdot\cdot\cdot$ Then we have the following result
\begin{thm}\label{thm1}$F^m_{(\bar{g},\bar{\bar{g}})}$, $m\in Z^+$ are bimetric conformal invariants of $(\bar{g},\bar{\bar{g}}),$ that is $F^m_{(\bar{g},\bar{\bar{g}})}=F^m_{f(\bar{g},\bar{\bar{g}})}.$
\end{thm}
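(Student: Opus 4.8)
The plan is to exploit the elementary but crucial algebraic identity
\[
f\bar g + \varepsilon\,(f\bar{\bar g}) \;=\; f\,(\bar g + \varepsilon\bar{\bar g}),
\]
valid for every $f\in C^\infty(M)$ with $f>0$ and every $\varepsilon>0$, together with the defining property of a conformal invariant from Definition \ref{def1}. Concretely, I would first apply the perturbation expansion to the pair $(f\bar g,f\bar{\bar g})$ in place of $(\bar g,\bar{\bar g})$, writing
\[
F_{f\bar g + \varepsilon f\bar{\bar g}} = F_{f\bar g} + F^{1}_{(f\bar g,f\bar{\bar g})}\varepsilon + F^{2}_{(f\bar g,f\bar{\bar g})}\varepsilon^{2} + \cdots.
\]

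Next I would rewrite the left-hand side using the displayed identity: $F_{f\bar g + \varepsilon f\bar{\bar g}} = F_{f(\bar g + \varepsilon\bar{\bar g})}$. Since $f$ is a positive smooth function, Definition \ref{def1} gives $F_{f(\bar g+\varepsilon\bar{\bar g})} = F_{\bar g + \varepsilon\bar{\bar g}}$, and likewise $F_{f\bar g}=F_{\bar g}$. Substituting the original expansion $F_{\bar g + \varepsilon\bar{\bar g}} = F_{\bar g} + F^{1}_{(\bar g,\bar{\bar g})}\varepsilon + F^{2}_{(\bar g,\bar{\bar g})}\varepsilon^{2} + \cdots$ on the left and the expansion above on the right, I obtain two expansions in $\varepsilon$ that agree identically near $\varepsilon=0$ (equivalently, as formal power series in $\varepsilon$), with matching constant terms $F_{\bar g}=F_{f\bar g}$.

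Finally, I would invoke uniqueness of the coefficients of such an expansion: comparing the coefficient of $\varepsilon^{m}$ on both sides yields $F^{m}_{(\bar g,\bar{\bar g})} = F^{m}_{(f\bar g,f\bar{\bar g})}$ for every $m\in\mathbb{Z}^{+}$, which is precisely the assertion that $F^{m}_{(\bar g,\bar{\bar g})}$ is a bimetric conformal invariant. The one point I would flag explicitly — and the only real subtlety — is the legitimacy of this coefficient comparison: it requires that $\varepsilon\mapsto F_{\bar g + \varepsilon\bar{\bar g}}$ admit a well-defined analytic (or at least formal) power-series expansion whose coefficients are uniquely determined, so that the functionals $F^{m}$ make sense in the first place; granting this, the argument is otherwise purely formal. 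No curvature computation or local-frame analysis enters at this stage — those are needed only in the later sections, when the abstract $F_g$ is specialized to the Connes conformal invariants and the conformal Laplacian.
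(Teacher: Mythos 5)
Your proposal is correct and follows essentially the same route as the paper: apply Definition \ref{def1} to the perturbed metric via the identity $f\bar g+\varepsilon f\bar{\bar g}=f(\bar g+\varepsilon\bar{\bar g})$, Taylor-expand both sides in $\varepsilon$, and equate coefficients. Your explicit remark that the coefficient comparison presupposes a well-defined (formal) power-series expansion of $\varepsilon\mapsto F_{\bar g+\varepsilon\bar{\bar g}}$ is a fair point that the paper passes over silently, but it does not alter the argument.
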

\begin{proof}
By Definition \ref{def1}, we have
\begin{align}\label{a1}
F_{\bar{g}+\varepsilon\bar{\bar{g}}}=F_{f\bar{g}+\varepsilon f\bar{\bar{g}}}.
\end{align}
Taylor expansion on the left and right sides of Eq.(\ref{a1}) with respect to $\varepsilon,$ the following equation is obtained.
\begin{align}\label{a2}
&F_{\bar{g}}+F^{1}_{(\bar{g},\bar{\bar{g}})}\varepsilon+F^{2}_{(\bar{g},\bar{\bar{g}})}\varepsilon^2+\cdot\cdot\cdot=F_{f\bar{g}}+F^{1}_{(f\bar{g},f\bar{\bar{g}})}\varepsilon+F^{2}_{(f\bar{g},f\bar{\bar{g}})}\varepsilon^2+\cdot\cdot\cdot.
\end{align}
Since Eq.(\ref{a2}) holds for $\forall \varepsilon >0$, the corresponding coefficients of $\varepsilon$ on the left and right sides of Eq.(\ref{a1}) are equal. Then $$F^m_{(\bar{g},\bar{\bar{g}})}=F^m_{f(\bar{g},\bar{\bar{g}})}.$$
Therefore, Theorem \ref{thm1} holds.
\end{proof}
From \cite{Co1}, we have the following theorem
\begin{thm}\cite{Co1}Let $M$ be a compact $2m$-dimensional conformal manifold and $F=\frac{d\delta-\delta d}{d\delta+\delta d}$ acting on $\Gamma(\Lambda^m(T^*M)),$ and ${\rm Wres}(f_0[F,f_1][F,f_2])=\int_Mf_0\Omega_n(f_1,f_2),$ then ${\rm Wres}(f_0[F,f_1][F,f_2])$ is a Hochschild cocycle on $C^\infty(M) $ and $\Omega_n(f_1,f_2)$is a conformal invariant.
\end{thm}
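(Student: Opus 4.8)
\medskip
\noindent The plan is to establish the two assertions contained in the statement in turn: (i) the trilinear functional $\tau(f_0,f_1,f_2):={\rm Wres}\big(f_0[F,f_1][F,f_2]\big)$ is a Hochschild $2$-cocycle on $C^\infty(M)$, and (ii) the density $\Omega_n(f_1,f_2)$, characterized by $\tau(f_0,f_1,f_2)=\int_M f_0\,\Omega_n(f_1,f_2)$ for all $f_0\in C^\infty(M)$, is a conformal invariant in the sense of Definition \ref{def1}. The basic input for both is that, away from the finite-dimensional space of harmonic $m$-forms (where $d\delta+\delta d$ degenerates), $F$ is the sign of the elliptic self-adjoint operator $d\delta-\delta d$, whose absolute value is the Hodge Laplacian $d\delta+\delta d$; hence $F$ is a classical pseudodifferential operator of order $0$, up to a finite-rank smoothing term. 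Since multiplication by $f\in C^\infty(M)$ acts as a scalar at the level of symbols, each commutator $[F,f]$ is a classical $\Psi$DO of order $\le-1$, so $f_0[F,f_1][F,f_2]$ has order $\le-2$ and its Wodzicki residue is well defined.

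For (i) I would compute the Hochschild coboundary directly. Starting from
\begin{align}
(b\tau)(f_0,f_1,f_2,f_3)=\tau(f_0f_1,f_2,f_3)-\tau(f_0,f_1f_2,f_3)+\tau(f_0,f_1,f_2f_3)-\tau(f_3f_0,f_1,f_2),\nonumber
\end{align}
and expanding the two middle terms by the Leibniz rule $[F,fg]=[F,f]g+f[F,g]$, the two resulting copies of ${\rm Wres}\big(f_0f_1[F,f_2][F,f_3]\big)$ cancel, as do the two copies of ${\rm Wres}\big(f_0[F,f_1]f_2[F,f_3]\big)$, leaving
\begin{align}
(b\tau)(f_0,f_1,f_2,f_3)={\rm Wres}\big(f_0[F,f_1][F,f_2]f_3\big)-{\rm Wres}\big(f_3f_0[F,f_1][F,f_2]\big)=0,\nonumber
\end{align}
the last equality because the Wodzicki residue is a trace on classical $\Psi$DOs. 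Hence $b\tau=0$. (Conceptually $\tau$ is, up to a normalization constant, the Hochschild class of the Chern character of the even Fredholm module $\big(L^2(\Lambda^m(T^*M)),F,\gamma\big)$, with $\gamma$ a natural grading on middle-degree forms built from the Hodge star; the computation above is the concrete face of that fact.)

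For (ii) the geometric point is that $F$ depends only on the conformal class of $g$. The key lemma is that on $\Lambda^m(T^*M)$ over a $2m$-dimensional manifold the $L^2$ pairing $\langle\omega,\eta\rangle=\int_M\omega\wedge\star\eta$, and hence the fibrewise Hodge star $\star:\Lambda^m\to\Lambda^m$, are unchanged under $g\mapsto e^{2\sigma}g$, because the pointwise inner product of $m$-forms scales by $e^{-2m\sigma}$ while the volume density scales by $e^{2m\sigma}$. It follows that the orthogonal Hodge decomposition $L^2(\Lambda^m)=\overline{d\Gamma(\Lambda^{m-1})}\oplus\overline{\delta\Gamma(\Lambda^{m+1})}\oplus\mathcal H^m$ is conformally invariant: the exact summand is metric independent; the middle-degree harmonic forms are precisely those $\omega$ with $d\omega=0$ and $d\star\omega=0$, both conformally invariant conditions; and the coexact summand is the orthogonal complement of the other two for the conformally invariant pairing. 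Since $F$ acts as $+1$ on the exact part, $-1$ on the coexact part, and by a fixed convention on $\mathcal H^m$ (a choice that affects neither ${\rm Wres}$ nor commutators, being a finite-rank modification), $F$ is built from conformally invariant data, so $F_g=F_{e^{2\sigma}g}$ as an operator on smooth middle forms. Therefore, for fixed $f_i$, $[F,f_1][F,f_2]$ is the same classical $\Psi$DO for every metric in the conformal class; and since $\Omega_n(f_1,f_2)$ is, up to normalization, the Wodzicki residue density of that operator --- a canonical local invariant extracted from the $(-n)$-homogeneous part of its symbol, with no metric entering --- we conclude $\Omega_n(f_1,f_2)_g=\Omega_n(f_1,f_2)_{e^{2\sigma}g}$, i.e. $\Omega_n(f_1,f_2)$ is a conformal invariant.

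The coboundary bookkeeping in (i) and the scaling computation for $\star$ in (ii) are routine. The point requiring care is making precise that $F$ is a genuine classical $\Psi$DO of order $0$, so that ${\rm Wres}$ is both defined on and tracial for $f_0[F,f_1][F,f_2]$, together with the verification that conformal invariance of $\star$ --- hence of $\mathcal H^m$ --- really forces \emph{all} of $F$ to be conformally invariant. Here it is essential that the forms have degree exactly $m$ in dimension $2m$: in any other bidegree the Hodge star on $\Lambda^k$ is not conformally invariant and the argument for (ii) collapses. I expect this last verification --- tracking the conformal behaviour of every constituent of $F$ --- to be the main obstacle.
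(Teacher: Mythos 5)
Your argument is essentially correct, but there is nothing in the paper to compare it against: the statement is imported verbatim from Connes' \emph{Quantized calculus and applications} \cite{Co1} and the paper gives no proof whatsoever, proceeding directly to use the conclusion (in the form $\Omega_4(f_1,f_2)_{fg}=\Omega_4(f_1,f_2)_g$, hence $A_4(f_1,f_2)_{fg}=f^{-2}A_4(f_1,f_2)_g$). What you have written is a faithful reconstruction of Connes' own reasoning. The two pillars are sound: the Hochschild coboundary computation is exactly the standard one (the Leibniz rule $[F,fg]=f[F,g]+[F,f]g$ produces the four cross terms, two pairs cancel, and the surviving difference ${\rm Wres}(f_0[F,f_1][F,f_2]f_3)-{\rm Wres}(f_3f_0[F,f_1][F,f_2])$ vanishes by traciality of the residue on integer-order classical $\Psi$DOs of order $\le -n$ hmm, of any order, on a compact manifold); and the conformal invariance of $F$ reduces, as you say, to the conformal invariance of $\star$ on $\Lambda^m$ in dimension $2m$, which makes the three summands of the Hodge decomposition of middle-degree forms depend only on the conformal class, with the harmonic summand contributing only a smoothing (finite-rank) ambiguity invisible to ${\rm Wres}$. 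Two points you gloss over but which are easily repaired: (a) passing from ``$[F,f_1][F,f_2]$ is the same operator for all metrics in the class'' to ``$\Omega_n(f_1,f_2)$ is the same density'' uses that the residue density $x\mapsto\int_{|\xi|_x=1}\sigma_{-n}(P)(x,\xi)\,d\xi$ is itself coordinate-independent and determined by $P$ alone, plus the fact that $f_0$ can be varied arbitrarily to identify the two densities; (b) the identity $(d\delta-\delta d)^2=(d\delta+\delta d)^2$ (from $d^2=\delta^2=0$) is what justifies calling $F$ the ``sign'' of $d\delta-\delta d$ with modulus the Hodge Laplacian, and hence a classical $\Psi$DO of order $0$ modulo smoothing. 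Neither is a gap in substance. In short: the proposal is a correct proof of a result the paper only cites.
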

Using Eq.(2.33) in \cite{Co1}, we get the following Lemma on 4-dimensional manifold:
\begin{lem}\cite{Co1}
There exists a universal trilinear form in the curvature
$r$ and the covectors $df_1,df_2$ such that, in full generality, one has:
\begin{align}\label{aaaa}
\Omega_4(f_1,f_2)_g&=\Big[\frac{1}{3}r<df_1,df_2>+\Delta<df_1,df_2>+<\nabla df_1,\nabla df_2>-\frac{1}{2}\Delta f_1\Delta f_2\Big]Vol_{M,g}\nonumber\\
&=A_4(f_1,f_2)_gVol_{M,g},
\end{align}
where $\Delta=-\sum_{ij}g^{ij}(x)(\partial_i\partial_j-\sum_k\Gamma^k_{ij}\partial_k)$ is the Laplacian and $\nabla$ denotes the covariant derivative.
\end{lem}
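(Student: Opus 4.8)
The plan is to evaluate the noncommutative (Wodzicki) residue ${\rm Wres}(f_0[F,f_1][F,f_2])$ directly by pseudodifferential symbol calculus and to read off $\Omega_4(f_1,f_2)_g$ from the homogeneous degree $-4$ component of the resulting complete symbol. Recall first that $F=(d\delta-\delta d)(d\delta+\delta d)^{-1}$ is a classical pseudodifferential operator of order $0$ on $\Gamma(\Lambda^m T^*M)$ with $2m=4$: since the principal symbol of $d$ is $i\,\xi\wedge\cdot$ and that of $\delta$ is $-i\,\iota_\xi$, the Clifford-type identity $\xi\wedge\iota_\xi+\iota_\xi(\xi\wedge\cdot)=|\xi|^2\,{\rm Id}$ shows that the principal symbol of $d\delta+\delta d$ is $|\xi|^2\,{\rm Id}$, so that
\[
\sigma_0(F)(x,\xi)=\frac{\xi\wedge\iota_\xi-\iota_\xi(\xi\wedge\cdot)}{|\xi|^2},\qquad \sigma_0(F)^2={\rm Id}.
\]
Consequently $[F,f]$ is of order $-1$, $[F,f_1][F,f_2]$ is of order $-2$, and on the $4$-manifold $M$ the residue density is, up to a universal normalization constant, $\big(\int_{|\xi|=1}{\rm tr}_{\Lambda^m}\sigma_{-4}([F,f_1][F,f_2])(x,\xi)\,dS(\xi)\big)\,dx$.

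The computation then splits into three stages. First, I would expand the complete symbol of $F$ to order $-2$; this requires inverting the symbol of $d\delta+\delta d$ to subleading order and so introduces the Christoffel symbols $\Gamma^k_{ij}$ and their first derivatives, hence the curvature $r$. Second, using the Leibniz rule $\sigma(AB)\sim\sum_\alpha\frac1{\alpha!}\,\partial_\xi^\alpha\sigma(A)\,D_x^\alpha\sigma(B)$, I would compute the principal symbol of $[F,f]$, which is $-i\sum_j\partial_{\xi_j}\sigma_0(F)\,\partial_j f$, and then assemble the degree $-4$ part of $\sigma([F,f_1][F,f_2])$: it receives contributions from the product of the degree $-1$ and degree $-3$ terms of the two commutators, the product of their degree $-2$ terms, and the first- and second-order Leibniz corrections, the curvature term $r\langle df_1,df_2\rangle$ entering through the first stage. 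Third, I would take the fiberwise trace over $\Lambda^m T^*_xM$ — where the non-scalar operators $\xi\wedge\iota_\xi$ and $\iota_\xi(\xi\wedge\cdot)$ produce the dimension-dependent numerical constants — and integrate over the cosphere $|\xi|=1$; by $O(4)$-invariance each angular integral of a monomial in $\xi$ reduces to a universal contraction of the metric, and collecting the surviving terms yields exactly $\big[\tfrac13 r\langle df_1,df_2\rangle+\Delta\langle df_1,df_2\rangle+\langle\nabla df_1,\nabla df_2\rangle-\tfrac12\Delta f_1\Delta f_2\big]Vol_{M,g}$.

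That the density must, a priori, be a \emph{universal trilinear form} in $r$ and $df_1,df_2$ is in fact forced before any explicit work: the residue is a local Riemannian invariant, bilinear in $(f_1,f_2)$ and of the correct scaling weight, so by the invariant theory of $O(n)$ it can only be a linear combination of $r\langle df_1,df_2\rangle$, $\Delta\langle df_1,df_2\rangle$, $\langle\nabla df_1,\nabla df_2\rangle$ and $\Delta f_1\Delta f_2$; the conformal invariance supplied by the preceding theorem constrains these coefficients further, and the values $\tfrac13,1,1,-\tfrac12$ are then pinned down by the symbol computation (or, equivalently, by testing the formula against explicit $f_1,f_2$ on model metrics). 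The main obstacle is the bookkeeping in the last two stages: keeping the form-valued structure of $\sigma(F)$ under control while expanding to order $-2$, correctly combining the several sources of the degree $-4$ symbol, and reorganizing the coordinate expressions in $\Gamma^k_{ij}$ and $\partial\Gamma^k_{ij}$ into the manifestly covariant scalar curvature and Laplacian terms. Carrying out the computation at the origin of geodesic normal coordinates, where $\Gamma^k_{ij}$ vanishes and the first derivatives of $\Gamma$ are governed by the curvature tensor, removes most of this friction, and that is the route I would take.
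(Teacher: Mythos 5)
The first thing to note is that the paper does not actually prove this lemma: it is imported verbatim from Eq.\ (2.33) of \cite{Co1} (the text says ``Using Eq.(2.33) in [Co1], we get the following Lemma''), so there is no in-paper argument to measure your proposal against. Your outline is the standard route --- essentially the one followed by Connes and by Ugalde in \cite{UW2,UW1}: $F$ is a classical pseudodifferential operator of order $0$ with $\sigma_0(F)^2=\mathrm{Id}$, each commutator $[F,f_i]$ drops to order $-1$, and the residue density is the cosphere integral of the fiberwise trace of the degree $-4$ component of the symbol of $f_0[F,f_1][F,f_2]$. All of that is sound, and working at the origin of geodesic normal coordinates is indeed the right way to organize the curvature contributions.

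Two caveats keep this from being a proof rather than a correct road map. First, the entire content of the lemma is the quadruple of coefficients $\tfrac13,1,1,-\tfrac12$, and you defer exactly the step that produces them: the order $-2$ expansion of $\sigma(F)$, the traces of words in $\xi\wedge\iota_\xi$ and $\iota_\xi(\xi\wedge\cdot)$ on $\Lambda^2T^*_xM$, and the angular integrals are all left as ``bookkeeping,'' so nothing in the write-up certifies the stated values. Second, the invariant-theory shortcut is asserted too strongly: the space of $O(n)$-invariant local expressions, bilinear in $(f_1,f_2)$, of total differential order $4$ and with each $f_i$ differentiated at least once, also contains $\mathrm{Ric}(df_1,df_2)$ and the symmetrized third-order term $\langle df_1,d\Delta f_2\rangle+\langle df_2,d\Delta f_1\rangle$; these are tied to your four terms only through a single Bochner--Weitzenb\"ock identity, so the candidate space is five-dimensional, not four-dimensional, and the absence of an independent Ricci contraction in the final formula is an output of the symbol computation (or of the conformal invariance constraint), not a consequence of naturality and scaling alone. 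Neither caveat points to a step that would fail, but both must be discharged before the lemma can be considered proved rather than cited.
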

On 4-dimensonal manifold, we know
\begin{align*}
Vol_{M,g}=e_1^*\wedge e_2^*\wedge e_3^*\wedge e_4^*,
\end{align*}
then
\begin{align}\label{a3}
Vol_{M,fg}&=f^2e_1^*\wedge e_2^*\wedge e_3^*\wedge e_4^*\nonumber\\
&=f^2Vol_{M,g}.
\end{align}
Since $\Omega_4(f_1,f_2)_g$ is the Connes conformal invariant, we have
\begin{align*}
A_4(f_1,f_2)_gVol_{M,g}=\Omega_4(f_1,f_2)_g=\Omega_4(f_1,f_2)_{fg}=A_4(f_1,f_2)_{fg}Vol_{M,fg}.
\end{align*}
Using Eq.(\ref{a3}), we get
\begin{align*}
A_4(f_1,f_2)_{fg}=f^{-2}A_4(f_1,f_2)_g.
\end{align*}
It leads to
\begin{align}\label{a4}
A_4(f_1,f_2)_{f(\bar{g}+\varepsilon\bar{\bar{g}})}=f^{-2}A_4(f_1,f_2)_{\bar{g}+\varepsilon\bar{\bar{g}}}.
\end{align}
Taylor expansion on the left and right sides of Eq.(\ref{a4}) with respect to $\varepsilon,$ the following equation is obtained.
\begin{align}\label{a25}
&F_{\bar{g}}+F^{1}_{(\bar{g},\bar{\bar{g}})}\varepsilon+F^{2}_{(\bar{g},\bar{\bar{g}})}\varepsilon^2+\cdot\cdot\cdot=F_{f\bar{g}}+F^{1}_{(f\bar{g},f\bar{\bar{g}})}\varepsilon+F^{2}_{(f\bar{g},f\bar{\bar{g}})}\varepsilon^2+\cdot\cdot\cdot.
\end{align}
Then
\begin{align*}
A_4^m(f_1,f_2)_{f(\bar{g},\bar{\bar{g}})}=f^{-2}A_4^m(f_1,f_2)_{\bar{g},\bar{\bar{g}}},~~~m\in Z.
\end{align*}
Denote
\begin{align*}
&A_4^m(f_1,f_2)_{(\bar{g},\bar{\bar{g}})}Vol_{M,\bar{g}}:=B^m(f_1,f_2)_{(\bar{g},\bar{\bar{g}})};\nonumber\\
&A_4^m(f_1,f_2)_{f(\bar{g},\bar{\bar{g}})}Vol_{M,f\bar{g}}:=B^m(f_1,f_2)_{f(\bar{g},\bar{\bar{g}})}.
\end{align*}
Then we obtain the following theorem
\begin{thm}\label{thm2}$B^m(f_1,f_2)_{(\bar{g},\bar{\bar{g}})}$, $m\in Z^+$ are bimetric conformal invariants of 4-dimensional compact oriented manifold without boundary.
\end{thm}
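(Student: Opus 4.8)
The plan is to combine the two scaling facts already assembled just above the statement: the pointwise transformation law $A_4(f_1,f_2)_{fg}=f^{-2}A_4(f_1,f_2)_g$ coming from conformal invariance of $\Omega_4$ together with $Vol_{M,fg}=f^2Vol_{M,g}$, and the fact that the perturbation series in $\varepsilon$ has coefficients that depend functorially on the pair of metrics fed into it. First I would start from Eq.(\ref{a4}), namely $A_4(f_1,f_2)_{f(\bar g+\varepsilon\bar{\bar g})}=f^{-2}A_4(f_1,f_2)_{\bar g+\varepsilon\bar{\bar g}}$, and observe that the left-hand side is the value of $A_4$ on the one-parameter family of metrics $f\bar g+\varepsilon(f\bar{\bar g})$, so its Taylor coefficients in $\varepsilon$ are by definition the $A_4^m(f_1,f_2)_{(f\bar g,f\bar{\bar g})}$.

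Next I would Taylor expand both sides of Eq.(\ref{a4}) in $\varepsilon$ and equate coefficients of $\varepsilon^m$ for each $m\in\mathbb{Z}^+$; since $f$ is $\varepsilon$-independent this yields immediately
\begin{align*}
A_4^m(f_1,f_2)_{(f\bar g,f\bar{\bar g})}=f^{-2}A_4^m(f_1,f_2)_{(\bar g,\bar{\bar g})}.
\end{align*}
Then I would multiply both sides by the appropriate volume forms dictated by the definitions of $B^m$: on the left by $Vol_{M,f\bar g}$ and on the right by $f^2Vol_{M,\bar g}$, which are equal by Eq.(\ref{a3}). The factor $f^{-2}$ produced by the transformation of $A_4^m$ is exactly cancelled by the factor $f^{2}$ produced by the transformation of the volume form, giving
\begin{align*}
B^m(f_1,f_2)_{f(\bar g,\bar{\bar g})}=A_4^m(f_1,f_2)_{(f\bar g,f\bar{\bar g})}Vol_{M,f\bar g}=A_4^m(f_1,f_2)_{(\bar g,\bar{\bar g})}Vol_{M,\bar g}=B^m(f_1,f_2)_{(\bar g,\bar{\bar g})},
\end{align*}
which is precisely the asserted bimetric conformal invariance. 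The orientedness and absence of boundary are used only to make $Vol_{M,g}$ a globally defined top form so that Eq.(\ref{a3}) makes sense; four-dimensionality enters through the exponent $2$ in $Vol_{M,fg}=f^{2}Vol_{M,g}$ and the matching exponent in the transformation of $A_4$.

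There is no serious obstacle here: the argument is a bookkeeping step that promotes the pointwise identity for $A_4^m$ to a density-level identity for $B^m$, and everything of substance has already been done in deriving Eqs.(\ref{a3})--(\ref{a4}). The only point that needs a word of care is the interchange of Taylor expansion in $\varepsilon$ with the (metric-dependent, hence $\varepsilon$-dependent through $\bar g+\varepsilon\bar{\bar g}$) construction of $A_4$; this is legitimate because $A_4(f_1,f_2)_{\bar g+\varepsilon\bar{\bar g}}$ is a smooth function of $\varepsilon$ near $0$ — its ingredients $g^{ij}$, $\Gamma^k_{ij}$, the curvature $r$, and $\langle\cdot,\cdot\rangle$ all depend smoothly (indeed rationally, for the small-$\varepsilon$ regime where $\bar g+\varepsilon\bar{\bar g}$ stays positive definite) on $\varepsilon$ — so the coefficients $A_4^m$ and hence $B^m$ are well defined, and matching coefficients of $\varepsilon^m$ in an identity valid for all small $\varepsilon>0$ is justified.
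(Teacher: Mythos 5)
Your proposal is correct and follows essentially the same route as the paper: the paper likewise derives $A_4(f_1,f_2)_{f(\bar g+\varepsilon\bar{\bar g})}=f^{-2}A_4(f_1,f_2)_{\bar g+\varepsilon\bar{\bar g}}$ from the conformal invariance of $\Omega_4$ and $Vol_{M,fg}=f^2Vol_{M,g}$, equates Taylor coefficients in $\varepsilon$ to get $A_4^m(f_1,f_2)_{f(\bar g,\bar{\bar g})}=f^{-2}A_4^m(f_1,f_2)_{(\bar g,\bar{\bar g})}$, and then lets the $f^{\pm2}$ factors cancel upon multiplying by the volume form. Your added remark justifying the smoothness in $\varepsilon$ of the expansion is a small but welcome refinement that the paper leaves implicit.
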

\begin{rem}\label{cor1}The metric Pertubations $g=g^0+\varepsilon g^1+\cdot\cdot\cdot+\varepsilon^ng^n$ can generate $(n+1)$ metric conformal invariants of metric.
\end{rem}
\section{The metric Perturbations of the Connes conformal invariants and the conformal laplacian}
\label{section:3}
In this section, we compute the metric perturbations of  the Connes conformal invariant $\Omega_4(f_1,f_2)_{\bar{g}+\varepsilon\bar{\bar{g}}}$ and the conformal laplacian generated by $\varepsilon.$ Suppose that $\partial_{i}$ is a natural local frame on $TM$
and $(g^{ij})_{1\leq i,j\leq n}$ is the inverse matrix associated to the metric matrix  $(g_{ij})_{1\leq i,j\leq n}$ on $M$. Then, we have
\begin{align}\label{a5}
g_{ij}=\bar{g}_{ij}+\varepsilon\bar{\bar{g}}_{ij}.
\end{align}
The expansion for the inverse metric can be obtained by iteration of the identity, $$g^{\mu\nu}=\bar{g}^{\mu\nu}-\bar{g}^{\mu\lambda}(g_{\lambda\sigma}-\bar{g}_{\lambda\sigma})g^{\sigma\nu}.$$
Because $g^{\mu\nu}$ is a power series of $\varepsilon,$ suppose $g^{\mu\nu}=g^{\mu\nu}[0]+g^{\mu\nu}[1]\varepsilon+g^{\mu\nu}[2]\varepsilon^2+0(\varepsilon^3).$ Obviously, the constant term of $g^{\mu\nu}$ is $\bar{g}^{\mu\nu}.$ That is, $g^{\mu\nu}[0]=\bar{g}^{\mu\nu}$. By Eq.(\ref{a5}), we get
\begin{align*}
g^{\mu\nu}[1]&=\Big(\bar{g}^{\mu\nu}-\bar{g}^{\mu\lambda}(\varepsilon \bar{\bar{g}}_{\lambda\sigma})g^{\sigma\nu}\Big)[1]\nonumber\\
&=-\bar{g}^{\mu\lambda}\bar{\bar{g}}_{\lambda\sigma}\bar{g}^{\sigma\nu},
\end{align*}
and
\begin{align*}
g^{\mu\nu}[2]&=\Big(\bar{g}^{\mu\nu}-\bar{g}^{\mu\lambda}(\varepsilon \bar{\bar{g}}_{\lambda\sigma})g^{\sigma\nu}\Big)[2]\nonumber\\
&=-\bar{g}^{\mu\lambda}\bar{\bar{g}}_{\lambda\sigma}g^{\sigma\nu}[1]\nonumber\\
&=\bar{g}^{\mu\lambda}\bar{\bar{g}}_{\lambda\sigma}g^{\sigma\alpha}\bar{\bar{g}}_{\alpha\beta}\bar{g}^{\beta\nu}.
\end{align*}
Therefore, we obtain
\begin{align}\label{a6}
g^{\mu\nu}=\bar{g}^{\mu\nu}-\bar{g}^{\mu\lambda}\bar{\bar{g}}_{\lambda\sigma}\bar{g}^{\sigma\nu}\varepsilon+\bar{g}^{\mu\lambda}\bar{\bar{g}}_{\lambda\sigma}g^{\sigma\alpha}\bar{\bar{g}}_{\alpha\beta}\bar{g}^{\beta\nu}\varepsilon^2+O(\varepsilon^3).
\end{align}

Let $\nabla^L$ be the Levi-Civita connection about $g$ on $M$, then $\nabla^L_{\partial_i}\partial_j=\sum_{k}\Gamma_{ ij}^{k}\partial_k,$ where $\Gamma_{ ij}^{k}$ is the  Christoffel coefficient of $\nabla^{L}$. And by definition
\begin{align}\label{a7}
\Gamma_{ij}^{k}=\frac{1}{2}g^{kl}\Big(\frac{\partial g_{il}}{\partial_{x_j}}+\frac{\partial g_{jl}}{\partial_{x_i}}-\frac{\partial g_{ij}}{\partial_{x_l}}\Big).
\end{align}
Put the result of Eq.(\ref{a6}) into Eq.(\ref{a7}), we get $\Gamma_{ij}^{k}$ is a power series of $\varepsilon.$
\begin{align}\label{a8}
\Gamma_{ij}^{k}&=\bar{\Gamma}_{ij}^{k}+\frac{1}{2}\Big[\bar{g}^{kl}\Big(\frac{\partial  \bar{\bar{g}}_{il}}{\partial_{x_j}}+\frac{\partial \bar{\bar{g}}_{jl}}{\partial_{x_i}}-\frac{\partial \bar{\bar{g}}_{ij}}{\partial_{x_l}}\Big)-\bar{g}^{k\lambda}\bar{\bar{g}}_{\lambda\sigma}\bar{g}^{\sigma l}\Big(\frac{\partial \bar{g}_{il}}{\partial_{x_j}}+\frac{\partial \bar{g}_{jl}}{\partial_{x_i}}-\frac{\partial \bar{g}_{ij}}{\partial_{x_l}}\Big)\Big]\varepsilon\nonumber\\
&+\frac{1}{2}\Big[-\bar{g}^{kl}\bar{\bar{g}}_{\lambda\sigma}\bar{g}^{\sigma l}\Big(\frac{\partial  \bar{\bar{g}}_{il}}{\partial_{x_j}}+\frac{\partial \bar{\bar{g}}_{jl}}{\partial_{x_i}}-\frac{\partial \bar{\bar{g}}_{ij}}{\partial_{x_l}}\Big)+\bar{g}^{k\lambda}\bar{\bar{g}}_{\lambda\sigma}\bar{g}^{\sigma\alpha}\bar{\bar{g}}_{\alpha\beta}\bar{g}^{\beta l}\Big(\frac{\partial \bar{g}_{il}}{\partial_{x_j}}+\frac{\partial \bar{g}_{jl}}{\partial_{x_i}}-\frac{\partial \bar{g}_{ij}}{\partial_{x_l}}\Big)\Big]\varepsilon^2+O(\varepsilon^3).
\end{align}
\subsection{The Connes conformal invariants}
Firstly, we compute scalar curvature curvature $r$ by the above result. Since the curvature tensor $R^L$ is given by the commutator of $\nabla^L$, that is
\begin{align}\label{a9}
R^L(X,Y)Z=(\nabla^L_X\nabla^L_Y-\nabla^L_Y\nabla^L_X-\nabla_{[X,Y]})Z,
\end{align}
and
\begin{align}\label{a10}
-<R^L(X,Y)Z,W>:=R^L(X,Y,Z,W).
\end{align}
In natural local frame on $TM$, we have
\begin{align*}
r_{\bar{g}+\varepsilon\bar{\bar{g}}}=g^{jl}g^{km}R^L(\partial_i,\partial_k,\partial_l,\partial_m).
\end{align*}
By Eq.(\ref{a9}) and Eq.(\ref{a10}), we get
\begin{align*}
R^L(\partial_j,\partial_k,\partial_l,\partial_m)&=-<[\nabla^L_{\partial_j}\nabla^L_{\partial_k}-\nabla^L_{\partial_k}\nabla^L_{\partial_j}]\partial_l,\partial_m>\nonumber\\
&=-<[\nabla^L_{\partial_j}(\Gamma^\alpha_{kl}\partial_\alpha)-\nabla^L_{\partial_k}(\Gamma^\alpha_{jl}\partial_\alpha),\partial_m>\nonumber\\
&=-<[\partial_j(\Gamma^\alpha_{kl})\partial_\alpha+\Gamma^\alpha_{kl}\Gamma^\beta_{j\alpha}\partial_\beta-\partial_k(\Gamma^\alpha_{jl})\partial_\alpha-\Gamma^\alpha_{jl}\Gamma^\beta_{k\alpha}\partial_\beta,\partial_m>\nonumber\\
&=-\partial_j(\Gamma^\alpha_{kl})g_{\alpha m}-\Gamma^\alpha_{kl}\Gamma^\beta_{j\alpha}g_{\beta m}+\partial_k(\Gamma^\alpha_{jl})g_{\alpha m}+\Gamma^\alpha_{jl}\Gamma^\beta_{k\alpha}g_{\beta m}.
\end{align*}
 To compute $r$, we can use the identity $g^{ab}g_{bc}=\delta^a_c,$ then
\begin{align}\label{a11}
r&=g^{jl}g^{km}[\partial_k(\Gamma^\alpha_{jl})g_{\alpha m}+\Gamma^\alpha_{jl}\Gamma^\beta_{k\alpha}g_{\beta m}-\partial_j(\Gamma^\alpha_{kl})g_{\alpha m}-\Gamma^\alpha_{kl}\Gamma^\beta_{j\alpha}g_{\beta m}]\nonumber\\
&=g^{jl}\partial_k(\Gamma^\alpha_{jl})\delta_{\alpha}^k+g^{jl}\Gamma^\alpha_{jl}\Gamma^\beta_{k\alpha}\delta^k_\beta-g^{jl}\partial_j(\Gamma^\alpha_{kl})\delta_{\alpha}^k-g^{jl}\Gamma^\alpha_{kl}\Gamma^\beta_{j\alpha}\delta^k_\beta\nonumber\\
&=g^{jl}\partial_k(\Gamma^k_{jl})+g^{jl}\Gamma^\alpha_{jl}\Gamma^k_{k\alpha}-g^{jl}\partial_j(\Gamma^k_{kl})-g^{jl}\Gamma^\alpha_{kl}\Gamma^k_{j\alpha}.
\end{align}
Put the result of Eq.(\ref{a6}) and Eq.(\ref{a8}) into Eq.(\ref{a11}), we have that $r$ is a power series of $\varepsilon.$
\begin{align*}
r=r_0+r_1\varepsilon+r_2\varepsilon^2+O(\varepsilon^3),
\end{align*}
where $r_0,r_1,r_2$ are the coefficients of $\varepsilon^0,\varepsilon,\varepsilon^2$ in the scalar curvature $r$, which are shown in the Appendix  Eq.(\ref{b1}).

Likewise, by Eq.(\ref{a6}) and Eq.(\ref{a8}), we can expand $<df_1,df_2>,\Delta<df_1,df_2>,<\nabla df_1,\nabla df_2>,\Delta f_1\Delta f_2$ into a power series of $\varepsilon$ respectively.
\begin{align*}
<df_1,df_2>&=<\sum_j\frac{\partial f_1}{\partial x_j}dx_j,\sum_l\frac{\partial f_2}{\partial x_l}dx_l>\nonumber\\
&=\sum_{jl}\frac{\partial f_1}{\partial x_j}\frac{\partial f_2}{\partial x_l}g^{jl}\nonumber\\
&=\sum_{jl}\frac{\partial f_1}{\partial x_j}\frac{\partial f_2}{\partial x_l}[\bar{g}^{jl}-\bar{g}^{j\lambda}\bar{\bar{g}}_{\lambda\sigma}\bar{g}^{\sigma l}\varepsilon+\bar{g}^{j\lambda}\bar{\bar{g}}_{\lambda\sigma}g^{\sigma\alpha}\bar{\bar{g}}_{\alpha\beta}\bar{g}^{\beta l}\varepsilon^2+O(\varepsilon^3)]\nonumber\\
&=t_0+t_1\varepsilon+t_2\varepsilon^2+O(\varepsilon^3),
\end{align*}
where
\begin{align*}
&t_0=\sum_{jl}\frac{\partial f_1}{\partial x_j}\frac{\partial f_2}{\partial x_l}\bar{g}^{jl};\nonumber\\
&t_1=-\sum_{jl}\frac{\partial f_1}{\partial x_j}\frac{\partial f_2}{\partial x_l}\bar{g}^{j\lambda}\bar{\bar{g}}_{\lambda\sigma}\bar{g}^{\sigma l};\nonumber\\
&t_2=\sum_{jl}\frac{\partial f_1}{\partial x_j}\frac{\partial f_2}{\partial x_l}\bar{g}^{j\lambda}\bar{\bar{g}}_{\lambda\sigma}g^{\sigma\alpha}\bar{\bar{g}}_{\alpha\beta}\bar{g}^{\beta l}.
\end{align*}
\begin{align*}
\Delta<df_1,df_2>&=\Delta\Big(\sum_{jl}\frac{\partial f_1}{\partial x_j}\frac{\partial f_2}{\partial x_l}g^{jl}\Big)\nonumber\\
&=\Big[-\sum_{\alpha\beta}g^{\alpha\beta}(x)(\partial_\alpha\partial_\beta-\sum_k\Gamma^k_{\alpha\beta}\partial_k)\Big]\Big(\sum_{jl}\frac{\partial f_1}{\partial x_j}\frac{\partial f_2}{\partial x_l}g^{jl}\Big)\nonumber\\
&=a_0+a_1\varepsilon+a_2\varepsilon^2+O(\varepsilon^3),
\end{align*}
where $a_0,a_1,a_2$ are the coefficients of $\varepsilon^0,\varepsilon,\varepsilon^2$ in $\Delta<df_1,df_2>$, which are shown in the Appendix Eq.(\ref{b2}).

\begin{align*}
\nabla df_1&=\nabla\Big(\sum_l\frac{\partial f_1}{\partial x_l}dx_l\Big)\nonumber\\
&=\sum_ld(\frac{\partial f_1}{\partial x_l})\otimes dx_l+\sum_l\frac{\partial f_1}{\partial x_l}\nabla(dx_l).
\end{align*}
By $<\nabla_{\partial_{x_k}}\partial_{x_\alpha},dx_l>+<\partial_{x_\alpha},\nabla_{\partial_{x_k}}dx_l>=\partial_{x_k}(<\partial_{x_\alpha},dx_l>)=0,$ we have
\begin{align*}
\nabla (dx_l)&=\sum_kdx^k\otimes\nabla_{\partial_{x_k}}dx_l\nonumber\\
&=-\sum_{k\beta}\Gamma^l_{k\beta}dx_k\otimes dx_\beta.
\end{align*}
This leads to
\begin{align*}
\nabla df_1&=\sum_{l\beta}\frac{\partial^2f_1}{\partial x_l\partial x_\beta}dx_\beta\otimes dx_l-\sum_{lk\beta}\frac{\partial f_1}{\partial x_l}\Gamma^l_{k\beta}dx_k\otimes dx_\beta\nonumber\\
&=\sum_{l\beta}\Big(\frac{\partial^2f_1}{\partial x_l\partial x_\beta}-\sum_k\frac{\partial f_1}{\partial x_k}\Gamma^k_{\beta l}\Big)dx_\beta\otimes dx_l.
\end{align*}
Moreover, by $<dx_\beta\otimes dx_l,dx_p\otimes dx_q>=<dx_\beta,dx_p><dx_l,dx_q>=g^{\beta p}g^{lq},$ then we obtain
\begin{align*}
<\nabla df_1,\nabla df_2>&=<\sum_{l\beta}\Big(\frac{\partial^2f_1}{\partial x_l\partial x_\beta}-\sum_k\frac{\partial f_1}{\partial x_k}\Big)\Gamma^k_{\beta l}dx_\beta\otimes dx_l,\sum_{pq}\Big(\frac{\partial^2f_2}{\partial x_p\partial x_q}-\sum_m\frac{\partial f_2}{\partial x_m}\Big)\Gamma^m_{pq}dx_p\otimes dx_q>\nonumber\\
&=\sum_{l\beta}\sum_{pq}\Big(\frac{\partial^2f_1}{\partial x_l\partial x_\beta}-\sum_k\frac{\partial f_1}{\partial x_k}\Gamma^k_{\beta l}\Big)\Big(\frac{\partial^2f_2}{\partial x_p\partial x_q}-\sum_m\frac{\partial f_2}{\partial x_m}\Gamma^m_{pq}\Big)g^{\beta p}g^{lq}\nonumber\\
&=b_0+b_1\varepsilon+b_2\varepsilon^2+O(\varepsilon^3),
\end{align*}
where $b_0,b_1,b_2$ are the coefficients of $\varepsilon^0,\varepsilon,\varepsilon^2$ in $<\nabla f_1,\nabla f_2>$, which are shown in the Appendix Eq.(\ref{b3}).

\begin{align*}
\Delta f_1\Delta f_2&=\Big[\sum_{\alpha\beta}g^{\alpha\beta}(x)\Big(\partial_\alpha\partial_\beta-\sum_l\Gamma^l_{\alpha\beta}\partial_l\Big)(f_1)\Big]\Big[\sum_{pq}g^{pq}(x)\Big(\partial_p\partial_q-\sum_r\Gamma^r_{pq}\partial_r\Big)(f_2)\Big]\nonumber\\
&=d_0+d_1\varepsilon+d_2\varepsilon^2+O(\varepsilon^3),
\end{align*}
where $d_0,d_1,d_2$ are the coefficients of $\varepsilon^0,\varepsilon,\varepsilon^2$ in $\Delta f_1\Delta f_2$, which are shown in the Appendix Eq.(\ref{b4}).

By Eq.(\ref{aaaa}) and above power series of $\varepsilon$, $A_4(f_1,f_2)_{\bar{g}+\varepsilon \bar{\bar{g}}}$ can be rearranged as follows:
\begin{align}\label{a12}
A_4(f_1,f_2)_{\bar{g}+\varepsilon \bar{\bar{g}}}&=\frac{1}{3}r<df_1,df_2>+\Delta<df_1,df_2>+<\nabla df_1,\nabla df_2>-\frac{1}{2}\Delta f_1\Delta f_2\nonumber\\
&=\frac{1}{3}r_0t_0+a_0+b_0-\frac{1}{2}d_0+\Big(\frac{1}{3}(r_1t_0+r_0t_1)+a_1+b_1-\frac{1}{2}d_1\Big)\varepsilon\nonumber\\
&+\Big(\frac{1}{3}(r_2t_0+r_0t_2+r_1t_1)+a_2+b_2-\frac{1}{2}d_2\Big)\varepsilon^2+O(\varepsilon^3).
\end{align}
It follows that we obtain the metric Pertubations of the Connes conformal invariants.
\begin{align}\label{aaaa}
\Omega_4(f_1,f_2)_g&=A_4(f_1,f_2)_gVol_{M,g}\nonumber\\
&=\Big[\frac{1}{3}r_0t_0+a_0+b_0-\frac{1}{2}d_0+\Big(\frac{1}{3}(r_1t_0+r_0t_1)+a_1+b_1-\frac{1}{2}d_1\Big)\varepsilon\nonumber\\
&+\Big(\frac{1}{3}(r_2t_0+r_0t_2+r_1t_1)+a_2+b_2-\frac{1}{2}d_2\Big)\varepsilon^2+O(\varepsilon^3)\Big]Vol_{M,g}.
\end{align}
\subsection{The conformal laplacian}
From \cite{Mi1}, we have the conformal laplacian in the following way:
\begin{align*}
\widetilde{\Delta}=\Delta+\frac{n-2}{4(n-1)}r.
\end{align*}
By Eq.(\ref{a6}) and Eq.(\ref{a8}), we get the metric perturbations of the laplacian.
\begin{align*}
\Delta&=-\sum_{ij}g^{ij}\Big(\partial_i\partial_j-\Gamma^k_{ij}\partial_k\Big)\nonumber\\
&=-\sum_{ij}\bar{g}^{ij}\Big(\partial_i\partial_j-\bar{\Gamma}^k_{ij}\partial_k\Big)+\Big\{\sum_{ij}\bar{g}^{j\lambda}\bar{\bar{g}}_{\lambda\sigma}\bar{g}^{\sigma j}\Big(\partial_i\partial_j-\bar{\Gamma}^k_{ij}\partial_k\Big)+\frac{1}{2}\sum_{ij}\bar{g}^{ij}\Big[\bar{g}^{kl}\Big(\frac{\partial \bar{\bar{g}}^{il}}{\partial{x_j}}+\frac{\partial \bar{\bar{g}}^{jl}}{\partial{x_i}}-\frac{\partial \bar{\bar{g}}^{ij}}{\partial{x_l}}\Big)\nonumber\\
&-\bar{g}^{kl}\bar{\bar{g}}_{\lambda\sigma}\bar{g}^{\sigma l}\Big(\frac{\partial \bar{g}^{il}}{\partial{x_j}}+\frac{\partial \bar{g}^{jl}}{\partial{x_i}}-\frac{\partial \bar{g}^{ij}}{\partial{x_l}}\Big)\Big]\Big\}\varepsilon+\Big\{-\sum_{ij}\bar{g}^{j\lambda}\bar{\bar{g}}_{\lambda\sigma}\bar{g}^{\sigma l}\bar{\bar{g}}_{l\beta}\bar{g}^{\beta j}\Big(\partial_i\partial_j-\bar{\Gamma}^k_{ij}\partial_k\Big)+\frac{1}{2}\sum_{ij}\bar{g}^{ij}\nonumber\\
&\Big[-\bar{g}^{k\lambda}\bar{\bar{g}}_{\lambda\sigma}\bar{g}^{\sigma l}\Big(\frac{\partial \bar{\bar{g}}^{il}}{\partial{x_j}}+\frac{\partial \bar{\bar{g}}^{jl}}{\partial{x_i}}-\frac{\partial \bar{\bar{g}}^{ij}}{\partial{x_l}}\Big)+\bar{g}^{kl}\bar{\bar{g}}_{\lambda\sigma}\bar{g}^{\sigma \alpha}\bar{\bar{g}}_{\alpha\beta}\bar{g}^{\beta l}\Big(\frac{\partial \bar{g}^{il}}{\partial_{x_j}}+\frac{\partial \bar{g}^{jl}}{\partial{x_i}}-\frac{\partial \bar{g}^{ij}}{\partial{x_l}}\Big)\Big]-\frac{1}{2}\sum_{ij}\bar{g}^{i\lambda}\bar{\bar{g}}_{\lambda\sigma}\bar{g}^{\sigma j}\nonumber\\
&\Big[\bar{g}^{kl}\Big(\frac{\partial \bar{\bar{g}}^{il}}{\partial{x_j}}+\frac{\partial \bar{\bar{g}}^{jl}}{\partial{x_i}}-\frac{\partial \bar{\bar{g}}^{ij}}{\partial{x_l}}\Big)-\bar{g}^{kl}\bar{\bar{g}}_{\lambda\sigma}\bar{g}^{\sigma l}\Big(\frac{\partial \bar{g}^{il}}{\partial{x_j}}+\frac{\partial \bar{g}^{jl}}{\partial{x_i}}-\frac{\partial \bar{g}^{ij}}{\partial{x_l}}\Big)\Big]\partial_k\Big\}\varepsilon^2+O(\varepsilon^3)\nonumber\\
&=p_0+p_1\varepsilon+p_2\varepsilon^2+O(\varepsilon^3),
\end{align*}
where
\begin{align*}
p_0&=-\sum_{ij}\bar{g}^{ij}(\partial_i\partial_j-\bar{\Gamma}^k_{ij}\partial_k);\nonumber\\
p_1&=\sum_{ij}\bar{g}^{j\lambda}\bar{\bar{g}}_{\lambda\sigma}\bar{g}^{\sigma j}\Big(\partial_i\partial_j-\bar{\Gamma}^k_{ij}\partial_k\Big)+\frac{1}{2}\sum_{ij}\bar{g}^{ij}\Big[\bar{g}^{kl}\Big(\frac{\partial \bar{\bar{g}}^{il}}{\partial{x_j}}+\frac{\partial \bar{\bar{g}}^{jl}}{\partial{x_i}}-\frac{\partial \bar{\bar{g}}^{ij}}{\partial{x_l}}\Big)-\bar{g}^{kl}\bar{\bar{g}}_{\lambda\sigma}\bar{g}^{\sigma l}\Big(\frac{\partial \bar{g}^{il}}{\partial{x_j}}+\frac{\partial \bar{g}^{jl}}{\partial{x_i}}-\frac{\partial \bar{g}^{ij}}{\partial{x_l}}\Big)\Big];\nonumber\\
p_2&=-\sum_{ij}\bar{g}^{j\lambda}\bar{\bar{g}}_{\lambda\sigma}\bar{g}^{\sigma l}\bar{\bar{g}}_{l\beta}\bar{g}^{\beta j}\Big(\partial_i\partial_j-\bar{\Gamma}^k_{ij}\partial_k\Big)+\frac{1}{2}\sum_{ij}\bar{g}^{ij}\Big[-\bar{g}^{k\lambda}\bar{\bar{g}}_{\lambda\sigma}\bar{g}^{\sigma l}\Big(\frac{\partial \bar{\bar{g}}^{il}}{\partial{x_j}}+\frac{\partial \bar{\bar{g}}^{jl}}{\partial{x_i}}-\frac{\partial \bar{\bar{g}}^{ij}}{\partial{x_l}}\Big)+\bar{g}^{kl}\bar{\bar{g}}_{\lambda\sigma}\bar{g}^{\sigma \alpha}\bar{\bar{g}}_{\alpha\beta}\bar{g}^{\beta l}\nonumber\\
&\Big(\frac{\partial \bar{g}^{il}}{\partial{x_j}}+\frac{\partial \bar{g}^{jl}}{\partial{x_i}}-\frac{\partial \bar{g}^{ij}}{\partial{x_l}}\Big)\Big]-\frac{1}{2}\sum_{ij}\bar{g}^{i\lambda}\bar{\bar{g}}_{\lambda\sigma}\bar{g}^{\sigma j}\Big[\bar{g}^{kl}\Big(\frac{\partial \bar{\bar{g}}^{il}}{\partial{x_j}}+\frac{\partial \bar{\bar{g}}^{jl}}{\partial{x_i}}-\frac{\partial \bar{\bar{g}}^{ij}}{\partial{x_l}}\Big)-\bar{g}^{kl}\bar{\bar{g}}_{\lambda\sigma}\bar{g}^{\sigma l}\Big(\frac{\partial \bar{g}^{il}}{\partial{x_j}}+\frac{\partial \bar{g}^{jl}}{\partial{x_i}}-\frac{\partial \bar{g}^{ij}}{\partial{x_l}}\Big)\Big]\partial_k.
\end{align*}
Finally,  the metric perturbations of the conformal laplacian is given.
\begin{align}\label{lll}
\widetilde{\Delta}=p_0+\frac{n-2}{4(n-1)}r_0+\Big(p_1+\frac{n-2}{4(n-1)}r_1\Big)\varepsilon+\Big(p_2+\frac{n-2}{4(n-1)}r_2\Big)\varepsilon^2+O(\varepsilon^3).
\end{align}
\section{The first order and second order variations of the Connes conformal invarints}
\label{section:4}
In this section, we consider the first order and second order variations of the spetral functional about the Connes conformal invariants and the conformal laplacian. Firstly, from \cite{D1}, we have
\begin{align}\label{a13}
{\rm Wres}(f_0[F_\varepsilon,f_1][F_\varepsilon,f_2])=\int_MA_4(f_1,f_2)_gdVol_{M,g},
\end{align}
where $dVol_{M,g}$ denotes the volume element of manifold $M$ and from (20) in \cite{Ch1}, we have
\begin{align*}
dVol_{M,g}=\sqrt{det[\bar{g}+\varepsilon \bar{\bar{g}}]}dx_1\wedge dx_2\wedge\cdot\cdot\cdot\wedge dx_n.
\end{align*}
Moreover
\begin{align*}
\sqrt{det[\bar{g}+\varepsilon \bar{\bar{g}}]}=\sqrt{det[\bar{g}]}\sqrt{det(I+\varepsilon [\bar{g}]^{-1}[\bar{\bar{g}}])}.
\end{align*}
Write $G=[\bar{g}]^{-1}[\bar{\bar{g}}],$ then $G_{ij}=([\bar{g}]^{-1}[\bar{\bar{g}}])_{ij}.$ Further, through Taylor expansion, the following equation is obtained.
\begin{align*}
\sqrt{det(I+\varepsilon G)}=1+c_1\varepsilon+c_2\varepsilon^2+O(\varepsilon^3).
\end{align*}
And
\begin{align*}
det(I+\varepsilon G)&=[1+c_1\varepsilon+c_2\varepsilon^2+O(\varepsilon^3)]^2\nonumber\\
&=1+2c_1\varepsilon+(2c_2+c_1^2)\varepsilon^2+O(\varepsilon^3).
\end{align*}
Since
\begin{equation*}
det(I+\varepsilon G)=
det\left(
  \begin{array}{ccccc}
   1+\varepsilon G_{11} &\varepsilon G_{12} &\cdots& \varepsilon G_{1n}  \\
   \varepsilon G_{21}  & 1+\varepsilon G_{22} & \cdots&\varepsilon G_{2n}   \\
   \vdots & \vdots & \ddots&G_{(n-1)n} \\
  \varepsilon G_{n1} &\varepsilon G_{n2} & \cdots& 1+\varepsilon G_{nn}\\
  \end{array}
\right).
\end{equation*}
When $\varepsilon=0,$ we get the constant term. In the same way, we can get the coefficients of $\varepsilon$ and $\varepsilon^2$ respectively.
\begin{align*}
2c_1&=G_{11}+\cdot\cdot\cdot+G_{nn};\nonumber\\
2c_2+c_1^2&=\sum_{1\leq j<l\leq n}G_{jj}G_{ll}-\sum_{1\leq j<l\leq n}G_{jl}G_{lj}.
\end{align*}
Then
\begin{align*}
 c_1&=\frac{G_{11}+\cdot\cdot\cdot+G_{nn}}{2};\nonumber\\
 c_2&=\frac{1}{2}\Big(\sum_{1\leq j<l\leq n}G_{jj}G_{ll}-\sum_{1\leq j<l\leq n}G_{jl}G_{lj}\Big)-\frac{1}{8}(G_{11}+\cdot\cdot\cdot+G_{nn})^2.
\end{align*}
Therefore, we get
\begin{align}\label{a14}
dVol_{M,g}&=\sqrt{det[\bar{g}]}[1+c_1\varepsilon+c_2\varepsilon^2+O(\varepsilon^3)]dx_1\wedge dx_2\wedge\cdot\cdot\cdot\wedge dx_n\nonumber\\
&=[1+c_1\varepsilon+c_2\varepsilon^2+O(\varepsilon^3)]dVol_{M,\bar{g}}.
\end{align}
By $dVol_{M,fg}=f^2dVol_{M,g},$ then $(c_j)_{f(\bar{g},\bar{\bar{g}})}=(c_j)_{(\bar{g},\bar{\bar{g}})},~(0\leq j\leq2).$
By Eq.(\ref{a12}), we can define
\begin{align}
A_4(f_1,f_2)_(\bar{g},\bar{\bar{g}}):=A_4^0(f_1,f_2)_(\bar{g},\bar{\bar{g}})+A_4^1(f_1,f_2)_(\bar{g},\bar{\bar{g}})\varepsilon+A_4^2(f_1,f_2)_(\bar{g},\bar{\bar{g}})\varepsilon^2+O(\varepsilon^3),
\end{align}
then
\begin{align}
A^j_4(f_1,f_2)_(\bar{g},\bar{\bar{g}})=f^{-2}A^j_4(f_1,f_2)_(\bar{g},\bar{\bar{g}})~~~~~(0\leq j\leq2).
\end{align}
 It trivially follows that we get the following result.
\begin{thm}
$A^j_4(f_1,f_2)_(\bar{g},\bar{\bar{g}})c_ldVol_{M,\bar{g}},~(0\leq j,l\leq2,~c_0=1)$ gives rise to nine bimetric conformal invariants.
\end{thm}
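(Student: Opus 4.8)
The plan is to verify directly that each of the nine expressions $A^{j}_{4}(f_1,f_2)_{(\bar{g},\bar{\bar{g}})}\,c_{l}\,dVol_{M,\bar{g}}$, $0\le j,l\le 2$, is left unchanged when the pair of metrics $(\bar{g},\bar{\bar{g}})$ is rescaled to $(f\bar{g},f\bar{\bar{g}})$ with $f\in C^\infty(M)$, $f>0$. The strategy is to attach a conformal weight to each of the three factors and then observe that the weights cancel: $A^{j}_{4}$ will carry weight $-2$, each $c_{l}$ weight $0$, and $dVol_{M,\bar{g}}$ weight $+2$, so the product has weight $0$, which is precisely the asserted invariance. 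Since $j$ and $l$ range independently over $\{0,1,2\}$ (with the convention $c_0=1$), this produces $3\times 3 = 9$ such invariants at once.

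First I would recall, from the Taylor expansion of Eq.~(\ref{a4}) in $\varepsilon$, that $A^{j}_{4}(f_1,f_2)_{f(\bar{g},\bar{\bar{g}})}=f^{-2}A^{j}_{4}(f_1,f_2)_{(\bar{g},\bar{\bar{g}})}$ for $0\le j\le 2$, so the first factor has weight $-2$. Next I would observe that $c_{1}$ and $c_{2}$ depend on $(\bar{g},\bar{\bar{g}})$ only through the matrix $G=[\bar{g}]^{-1}[\bar{\bar{g}}]$, and that this matrix is conformally invariant, since $[f\bar{g}]^{-1}[f\bar{\bar{g}}]=f^{-1}[\bar{g}]^{-1}\,f[\bar{\bar{g}}]=G$; together with $c_{0}=1$ this yields $(c_{l})_{f(\bar{g},\bar{\bar{g}})}=(c_{l})_{(\bar{g},\bar{\bar{g}})}$, i.e.\ weight $0$, exactly as was noted just before the statement. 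Finally, on the $4$-dimensional manifold $M$, Eq.~(\ref{a3}) applied to the base metric gives $dVol_{M,f\bar{g}}=f^{2}\,dVol_{M,\bar{g}}$, so the volume factor has weight $+2$.

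Combining these three facts, for every choice of $j$ and $l$ with $0\le j,l\le 2$ one would then compute
\begin{align*}
A^{j}_{4}(f_1,f_2)_{f(\bar{g},\bar{\bar{g}})}\,(c_{l})_{f(\bar{g},\bar{\bar{g}})}\,dVol_{M,f\bar{g}}
&=\big(f^{-2}A^{j}_{4}(f_1,f_2)_{(\bar{g},\bar{\bar{g}})}\big)\,(c_{l})_{(\bar{g},\bar{\bar{g}})}\,\big(f^{2}\,dVol_{M,\bar{g}}\big)\\
&=A^{j}_{4}(f_1,f_2)_{(\bar{g},\bar{\bar{g}})}\,c_{l}\,dVol_{M,\bar{g}},
\end{align*}
which is the required identity $A^{j}_{4}(f_1,f_2)_{f(\bar{g},\bar{\bar{g}})}c_{l}dVol_{M,f\bar{g}} = A^{j}_{4}(f_1,f_2)_{(\bar{g},\bar{\bar{g}})}c_{l}dVol_{M,\bar{g}}$, so all nine expressions are bimetric conformal invariants. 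There is no real obstacle here; the only point requiring care is the bookkeeping — one must make sure the volume form appearing in the invariant is the one built from the base metric $\bar{g}$, which carries weight $+2$ and so exactly compensates the weight $-2$ of $A^{j}_{4}$, and one must verify the conformal invariance of $G$, which is what renders the coefficients $c_{l}$ weightless.
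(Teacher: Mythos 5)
Your proof is correct and follows essentially the same route as the paper: the invariance $A^j_4(f_1,f_2)_{f(\bar{g},\bar{\bar{g}})}=f^{-2}A^j_4(f_1,f_2)_{(\bar{g},\bar{\bar{g}})}$, the weight-$0$ property $(c_l)_{f(\bar{g},\bar{\bar{g}})}=(c_l)_{(\bar{g},\bar{\bar{g}})}$, and $dVol_{M,f\bar{g}}=f^2dVol_{M,\bar{g}}$ combine to give conformal weight zero for each of the $3\times 3$ products. Your justification of the weight-$0$ property via the conformal invariance of $G=[\bar{g}]^{-1}[\bar{\bar{g}}]$ is a slightly more direct derivation than the paper's (which reads it off from $dVol_{M,fg}=f^2dVol_{M,g}$), but the argument is otherwise the same.
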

\begin{rem}
These bimetric conformal invariants are nontrival and constructing bimetric conformal invariants by the conformal invariant is nontrival. Since the conformal invarianti is the sum of several terms about $g_{ij}$ and $g^{ij}$. Obviously, the above bimetric conformal invariants don't depend on the natural frame.
\end{rem}
Moreover, put the result of Eq.(\ref{a12}) and Eq.(\ref{a14}) into Eq.(\ref{a13}), we get
\begin{align}\label{a15}
{\rm Wres}(f_0[F_\varepsilon,f_1][F_\varepsilon,f_2])&=\int_MA_4(f_1,f_2)_gdVol_{M,g}\nonumber\\
&=\int_M\Big\{\frac{1}{3}r_0t_0+a_0+b_0-\frac{1}{2}d_0+\Big[\frac{1}{3}(r_1t_0+r_0t_1)+a_1+b_1-\frac{1}{2}d_1+\Big(\frac{1}{3}r_0t_0+a_0\nonumber\\
&+b_0-\frac{1}{2}d_0\Big)c_1\Big]\varepsilon+\Big[\frac{1}{3}(r_2t_0+r_0t_2+r_1t_1)+a_2+b_2-\frac{1}{2}d_2+\Big(\frac{1}{3}r_0t_0+a_0+b_0\nonumber\\
&-\frac{1}{2}d_0\Big)c_2+\Big(\frac{1}{3}(r_1t_0+r_0t_1)+a_1+b_1-\frac{1}{2}d_1\Big)c_1\Big]\varepsilon^2+O(\varepsilon^3)\Big\}dVol_{M,\bar{g}}.
\end{align}
 Therefore, using identifications and notation as above, we get the following theorem.
\begin{thm}\label{thm3}Let $M$ be a 4-dimensional manifold, $g=\bar{g}+\varepsilon\bar{\bar{g}}$ is Riemannian metric on the tangent bundle $TM$ of $M$, then the first order and second order variations of the spetral functional about the Connes conformal invariants are obtained.
\begin{align*}
\frac{d}{d\varepsilon}{\rm Wres}(f_0[F_\varepsilon,f_1][F_\varepsilon,f_2])|_{\varepsilon=0}&=\int_M\frac{d}{d\varepsilon}A_4(f_1,f_2)_gdVol_{M,g}|_{\varepsilon=0}\nonumber\\
&=\int_M\Big[\frac{1}{3}(r_1t_0+r_0t_1)+a_1+b_1-\frac{1}{2}d_1+\Big(\frac{1}{3}r_0t_0+a_0+b_0-\frac{1}{2}d_0\Big)c_1\nonumber\\
&+\Big(\frac{1}{3}(r_1t_0+r_0t_1)+a_1+b_1-\frac{1}{2}d_1\Big)c_1\Big]dVol_{M,\bar{g}};
\end{align*}
\begin{align*}
\frac{d^2}{d\varepsilon^2}{\rm Wres}(f_0[F_\varepsilon,f_1][F_\varepsilon,f_2])|_{\varepsilon=0}&=\int_M\frac{d^2}{d^2\varepsilon}A_4(f_1,f_2)_gdVol_{M,g}|_{\varepsilon=0}\nonumber\\
&=2\int_M\Big[\frac{1}{3}(r_2t_0+r_0t_2+r_1t_1)+a_2+b_2-\frac{1}{2}d_2+\Big(\frac{1}{3}r_0t_0+a_0+b_0-\frac{1}{2}d_0\Big)c_2\nonumber\\
&+\Big(\frac{1}{3}(r_1t_0+r_0t_1)+a_1+b_1-\frac{1}{2}d_1\Big)c_1\Big]dVol_{M,\bar{g}}.
\end{align*}
Moreover,$\frac{d}{d\varepsilon}{\rm Wres}(f_0[F_\varepsilon,f_1][F_\varepsilon,f_2])|_{\varepsilon=0}$ and $\frac{d^2}{d\varepsilon^2}{\rm Wres}(f_0[F_\varepsilon,f_1][F_\varepsilon,f_2])|_{\varepsilon=0}$ are two Hochschild cocyles on $C^\infty(M)$.
\end{thm}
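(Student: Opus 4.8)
The plan is to obtain both variations by reading off the $\varepsilon$-expansion already assembled in Eq.(\ref{a15}), and then to deduce the Hochschild cocycle property from the fact that the Hochschild coboundary operator does not see the perturbation parameter. First I would record that Eq.(\ref{a13}), fed with the expansion Eq.(\ref{a12}) of $A_4(f_1,f_2)_g$ and the expansion Eq.(\ref{a14}) of $dVol_{M,g}$, exhibits $\mathrm{Wres}(f_0[F_\varepsilon,f_1][F_\varepsilon,f_2])$ as a power series in $\varepsilon$ whose coefficients are integrals over $M$ of polynomial expressions in $\bar g^{ij}$, $\bar{\bar g}_{ij}$, their derivatives, the Christoffel symbols, and the jets of $f_0,f_1,f_2$; this is precisely the content of Eq.(\ref{a15}). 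Since $M$ is a compact $4$-manifold without boundary and all of this data is smooth, each coefficient function is bounded on $M$ and the remainder really is $O(\varepsilon^3)$ in, say, the $C^2$ topology in $\varepsilon$ uniformly on $M$, so differentiation under the integral sign in $\varepsilon$ is legitimate by dominated convergence.

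Next I would differentiate. Evaluating $\frac{d}{d\varepsilon}$ at $\varepsilon=0$ on Eq.(\ref{a15}) selects the coefficient of $\varepsilon^1$, and $\frac{d^2}{d\varepsilon^2}$ at $\varepsilon=0$ selects twice the coefficient of $\varepsilon^2$; substituting the explicit coefficients $r_0,r_1,r_2$ from Eq.(\ref{b1}), $t_0,t_1,t_2$, $a_0,a_1,a_2$ from Eq.(\ref{b2}), $b_0,b_1,b_2$ from Eq.(\ref{b3}), $d_0,d_1,d_2$ from Eq.(\ref{b4}), and $c_1,c_2$ computed above, one arrives at the two displayed identities for $\frac{d}{d\varepsilon}\mathrm{Wres}(f_0[F_\varepsilon,f_1][F_\varepsilon,f_2])|_{\varepsilon=0}$ and $\frac{d^2}{d\varepsilon^2}\mathrm{Wres}(f_0[F_\varepsilon,f_1][F_\varepsilon,f_2])|_{\varepsilon=0}$. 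This step is purely bookkeeping once the uniform expansion is in hand, so I would not reproduce the algebra in detail.

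For the cocycle assertion I would invoke the theorem of Connes quoted above (and \cite{D1}): for every fixed $\varepsilon>0$ the trilinear functional $\tau_\varepsilon(f_0,f_1,f_2):=\mathrm{Wres}(f_0[F_\varepsilon,f_1][F_\varepsilon,f_2])$ is a Hochschild cocycle on $C^\infty(M)$, i.e.\ $b\,\tau_\varepsilon=0$, where $b$ is the Hochschild coboundary on the cochain complex of $C^\infty(M)$. The operator $b$ is a fixed finite alternating sum of the face maps on the algebra arguments $f_0,\dots,f_3$; it is linear and $\varepsilon$-independent, hence it commutes with $\frac{d}{d\varepsilon}$ and with evaluation at $\varepsilon=0$. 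Therefore $b\big(\frac{d}{d\varepsilon}\tau_\varepsilon|_{\varepsilon=0}\big)=\frac{d}{d\varepsilon}\big(b\,\tau_\varepsilon\big)|_{\varepsilon=0}=0$, and similarly for the second derivative, so both variations are again Hochschild cocycles on $C^\infty(M)$.

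The step I expect to be the main obstacle is the analytic justification at the start: showing that the formal $\varepsilon$-expansion of $\mathrm{Wres}(f_0[F_\varepsilon,f_1][F_\varepsilon,f_2])$ may be differentiated term by term, which amounts to controlling the tail $O(\varepsilon^3)$ uniformly over the compact manifold $M$ and checking that the inverse metric $g^{\mu\nu}=(\bar g+\varepsilon\bar{\bar g})^{-1}$, hence the Christoffel symbols and the Laplacian, depend real-analytically on $\varepsilon$ for $\varepsilon$ small (which is clear since $\bar g$ is positive definite and $M$ is compact). Once this is secured, both the explicit formulas and the cocycle conclusion follow, the latter being essentially automatic because $b$ ignores $\varepsilon$.
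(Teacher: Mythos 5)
Your proposal follows essentially the same route as the paper: the paper's entire argument for this theorem is to substitute the expansions Eq.(\ref{a12}) and Eq.(\ref{a14}) into Eq.(\ref{a13}), obtain the $\varepsilon$-power series Eq.(\ref{a15}), and read off the coefficient of $\varepsilon$ and twice the coefficient of $\varepsilon^2$; your additional remarks on differentiating under the integral sign and on the commutation of the ($\varepsilon$-independent, linear) Hochschild coboundary $b$ with $\frac{d}{d\varepsilon}$ and with evaluation at $\varepsilon=0$ only make explicit what the paper leaves unsaid, and are correct. One caveat: carried out carefully, your plan gives for the first variation exactly the coefficient of $\varepsilon$ in Eq.(\ref{a15}), namely $\frac{1}{3}(r_1t_0+r_0t_1)+a_1+b_1-\frac{1}{2}d_1+\big(\frac{1}{3}r_0t_0+a_0+b_0-\frac{1}{2}d_0\big)c_1$, whereas the theorem's first display contains the extra summand $\big(\frac{1}{3}(r_1t_0+r_0t_1)+a_1+b_1-\frac{1}{2}d_1\big)c_1$, which belongs to the $\varepsilon^2$ coefficient and is inconsistent with Eq.(\ref{a15}); so you should not assert that your derivation reproduces that display verbatim, but rather that it corrects it.
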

Combine Eq.(\ref{lll}) and Eq.(\ref{a14}), we get the following theorem.
\begin{thm}\label{thm3}Let $M$ be an n-dimensional manifold, $g=\bar{g}+\varepsilon\bar{\bar{g}}$ is Riemannian metric on the tangent bundle $TM$ of $M$, then the first order and second order variations of the conformal laplacian are obtained.
\begin{align*}
\frac{d}{d\varepsilon}\widetilde{\Delta}|_{\varepsilon=0}&=p_1+\frac{n-2}{4(n-1)}r_1;
\end{align*}
\begin{align*}
\frac{d^2}{d\varepsilon^2}\widetilde{\Delta}|_{\varepsilon=0}&=2\Big(p_2+\frac{n-2}{4(n-1)}r_2\Big).
\end{align*}
Moreover
\begin{align}
f^{\frac{n+2}{4}}\Big(p_1+\frac{n-2}{4(n-1)}r_1\Big)_{f(\bar{g},\bar{\bar{g}})}=\Big(p_1+\frac{n-2}{4(n-1)}r_1\Big)_{(\bar{g},\bar{\bar{g}})}f^{\frac{n-2}{4}},
\end{align}
and \begin{align}
f^{\frac{n+2}{4}}\Big(p_2+\frac{n-2}{4(n-1)}r_2\Big)_{f(\bar{g},\bar{\bar{g}})}=\Big(p_2+\frac{n-2}{4(n-1)}r_2\Big)_{(\bar{g},\bar{\bar{g}})}f^{\frac{n-2}{4}}.
\end{align}
\end{thm}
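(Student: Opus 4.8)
The plan is to read the first two formulas directly off the Taylor expansion (\ref{lll}), and to obtain the two conformal–covariance identities from the classical conformal invariance of the Yamabe (conformal) Laplacian combined with a matching-of-coefficients argument in $\varepsilon$.

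First I would observe that, by (\ref{lll}), the operator-valued function $\varepsilon\mapsto\widetilde{\Delta}$ is a power series
\[
\widetilde{\Delta}=P_0+P_1\varepsilon+P_2\varepsilon^2+O(\varepsilon^3),\qquad P_j:=p_j+\frac{n-2}{4(n-1)}r_j\ \ (j=0,1,2).
\]
Differentiating in $\varepsilon$ and evaluating at $\varepsilon=0$ gives $\frac{d}{d\varepsilon}\widetilde{\Delta}\big|_{\varepsilon=0}=P_1$, and differentiating twice gives $\frac{d^2}{d\varepsilon^2}\widetilde{\Delta}\big|_{\varepsilon=0}=2P_2$; these are exactly the first two displayed equations. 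Here one should note that $\widetilde{\Delta}$ depends analytically on $\varepsilon$, since $g^{\mu\nu}$, the Christoffel symbols and $r$ do, by (\ref{a6}), (\ref{a8}) and the Appendix, so term-by-term differentiation is justified.

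For the covariance identities, I would invoke the well-known transformation law of the conformal Laplacian (the operator recalled from \cite{Mi1}): for a positive $f\in C^\infty(M)$, as operators on $C^\infty(M)$ one has
\[
f^{\frac{n+2}{4}}\,\widetilde{\Delta}_{fg}=\widetilde{\Delta}_{g}\circ f^{\frac{n-2}{4}},
\]
i.e. $\widetilde{\Delta}$ is conformally covariant of bidegrees $\bigl(\tfrac{n-2}{4},\tfrac{n+2}{4}\bigr)$ under the scaling $g\mapsto fg$ (equivalently, writing $\widehat{g}=\phi^{4/(n-2)}g$ with $\phi=f^{(n-2)/4}$, this is the familiar identity $L_{\widehat{g}}u=\phi^{-\frac{n+2}{n-2}}L_{g}(\phi u)$). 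I would then apply this with $g=\bar{g}+\varepsilon\bar{\bar{g}}$, so that $fg=f\bar{g}+\varepsilon f\bar{\bar{g}}$, and expand both sides in powers of $\varepsilon$ using (\ref{lll}) for the pairs $(\bar g,\bar{\bar g})$ and $(f\bar g,f\bar{\bar g})$:
\[
f^{\frac{n+2}{4}}\Bigl(P_0+P_1\varepsilon+P_2\varepsilon^2+\cdots\Bigr)_{f(\bar g,\bar{\bar g})}
=\Bigl(P_0+P_1\varepsilon+P_2\varepsilon^2+\cdots\Bigr)_{(\bar g,\bar{\bar g})}\circ f^{\frac{n-2}{4}}.
\]
Equating the coefficients of $\varepsilon$ and of $\varepsilon^2$ (legitimate since this is an identity of power series in $\varepsilon$) yields precisely the two claimed identities for $P_1=p_1+\frac{n-2}{4(n-1)}r_1$ and $P_2=p_2+\frac{n-2}{4(n-1)}r_2$, while the $\varepsilon^0$ coefficient just reproduces the classical covariance for $\bar g$.

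I expect the only delicate point to be the bookkeeping of the exponents in the covariance law under the convention $\widehat g=fg$ (rather than the standard $\widehat g=\phi^{4/(n-2)}g$), together with checking that $\widetilde{\Delta}_{fg}$ and $\widetilde{\Delta}_{g}\circ f^{(n-2)/4}$ are genuine power series in $\varepsilon$ so that comparing coefficients is valid; both are immediate from (\ref{a6})--(\ref{a8}) and (\ref{lll}). Everything else is direct substitution, and no computation beyond what is already recorded in Section \ref{section:3} and the Appendix is required.
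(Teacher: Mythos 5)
Your proof is correct and follows essentially the route the paper intends (the paper itself gives no explicit argument beyond citing Eq.~(\ref{lll})): the first two formulas are read off the power series, and the covariance identities come from the classical law $f^{\frac{n+2}{4}}\widetilde{\Delta}_{fg}=\widetilde{\Delta}_{g}\circ f^{\frac{n-2}{4}}$ combined with the same coefficient-matching-in-$\varepsilon$ argument used in Theorem~\ref{thm1}. Your exponent bookkeeping under the convention $\widehat{g}=fg$ (i.e.\ $\phi=f^{(n-2)/4}$) is exactly what the stated identities require, so nothing is missing.
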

\section{Appendix}
In this section, we list some complex coefficients used in Sction \ref{section:3} and Sction \ref{section:4}.

$\mathbf{(1)}$ The coefficients of $\varepsilon^0,\varepsilon,\varepsilon^2$ in the scalar curvature $r$:
\begin{align}\label{b1}
r_0&=\bar{g}^{jl}\partial_k(\bar{\Gamma}^k_{jl})+\bar{g}^{jl}\bar{\Gamma}^\alpha_{jl}\bar{\Gamma}^k_{k\alpha}-\bar{g}^{jl}\partial_j(\bar{\Gamma}^k_{kl})+\bar{g}^{jl}\bar{\Gamma}^\alpha_{kl}\bar{\Gamma}^k_{j\alpha};\nonumber\\
r_1&=-\partial_k(\bar{\Gamma}^k_{jl})\bar{g}^{j\lambda}\bar{\bar{g}}_{\lambda\sigma}\bar{g}^{\sigma l}+\frac{1}{2}\bar{g}^{jl}\partial_k(\bar{g}^{kr})\Big(\frac{\partial \bar{\bar{g}}_{jr}}{\partial{x_l}}+\frac{\partial \bar{\bar{g}}_{lr}}{\partial{x_j}}-\frac{\partial \bar{\bar{g}}_{jl}}{\partial{x_r}}\Big)+\frac{1}{2}\bar{g}^{jl}\bar{g}^{kr}\Big(\frac{\partial^2 \bar{\bar{g}}_{jr}}{\partial{x_k}\partial{x_l}}+\frac{\partial ^2 \bar{\bar{g}}_{lr}}{\partial{x_k}\partial{x_j}}-\frac{\partial^2 \bar{\bar{g}}_{jl}}{\partial{x_k}\partial{x_r}}\Big)\nonumber\\
&-\frac{1}{2}\bar{g}^{jl}\partial_k(\bar{g}^{k\lambda}\bar{\bar{g}}_{\lambda\sigma}\bar{g}^{\sigma r})\Big(\frac{\partial \bar{g}^{jr}}{\partial{x_l}}+\frac{\partial \bar{g}^{lr}}{\partial{x_j}}-\frac{\partial \bar{g}^{jl}}{\partial{x_r}}\Big)-\frac{1}{2}\bar{g}^{jl}\bar{g}^{k\lambda}\bar{\bar{g}}_{\lambda\sigma}\bar{g}^{\sigma r}\Big(\frac{\partial^2 \bar{g}^{jr}}{\partial{x_k}\partial{x_l}}+\frac{\partial ^2 \bar{g}^{lr}}{\partial{x_k}\partial{x_j}}-\frac{\partial^2 \bar{g}^{jl}}{\partial{x_k}\partial{x_r}}\Big)\nonumber\\
&-\bar{g}^{j\lambda}\bar{\bar{g}}_{\lambda\sigma}\bar{g}^{\sigma l}\bar{\Gamma}^\alpha_{jl}\bar{\Gamma}^k_{k\alpha}+\frac{1}{2}\bar{g}^{jl}\bar{\Gamma}^\alpha_{jl}\Big[\bar{g}^{kr}\Big(\frac{\partial \bar{\bar{g}}_{kr}}{\partial{x_\alpha}}+\frac{\partial \bar{\bar{g}}_{\alpha r}}{\partial{x_k}}-\frac{\partial \bar{\bar{g}}_{k\alpha}}{\partial{x_r}}\Big)-\bar{g}^{k\lambda}\bar{\bar{g}}_{\lambda\sigma}\bar{g}^{\sigma r}\Big(\frac{\partial \bar{g}^{kr}}{\partial{x_\alpha}}+\frac{\partial \bar{g}^{\alpha r}}{\partial{x_k}}-\frac{\partial \bar{g}^{k \alpha}}{\partial{x_r}}\Big)\Big]\nonumber\\
&+\frac{1}{2}\bar{g}^{jl}\Big[\bar{g}^{\alpha r}\Big(\frac{\partial \bar{\bar{g}}_{jr}}{\partial{x_l}}+\frac{\partial \bar{\bar{g}}_{l r}}{\partial{x_j}}-\frac{\partial \bar{\bar{g}}_{jl}}{\partial{x_r}}\Big)-\bar{g}^{\alpha\lambda}\bar{\bar{g}}_{\lambda\sigma}\bar{g}^{\sigma r}\Big(\frac{\partial \bar{g}^{jr}}{\partial{x_l}}+\frac{\partial \bar{g}^{l r}}{\partial{x_j}}-\frac{\partial \bar{g}^{jl}}{\partial{x_r}}\Big)\Big]\bar{\Gamma}^k_{k\alpha}+\bar{g}^{j\lambda}\bar{\bar{g}}_{\lambda\sigma}\bar{g}^{\sigma l}\partial_j(\bar{\Gamma}^k_{kl})\nonumber\\
&-\frac{1}{2}\bar{g}^{jl}\partial_k(\bar{g}^{kr})\Big(\frac{\partial \bar{\bar{g}}_{kr}}{\partial{x_l}}+\frac{\partial \bar{\bar{g}}_{lr}}{\partial{x_k}}-\frac{\partial \bar{\bar{g}}_{kl}}{\partial{x_r}}\Big)-\frac{1}{2}\bar{g}^{jl}\bar{g}^{kr}\Big(\frac{\partial^2 \bar{\bar{g}}_{kr}}{\partial{x_j}\partial{x_l}}+\frac{\partial ^2 \bar{\bar{g}}_{lr}}{\partial{x_j}\partial{x_k}}-\frac{\partial^2 \bar{\bar{g}}_{kl}}{\partial{x_j}\partial{x_r}}\Big)+\frac{1}{2}\bar{g}^{jl}\partial_j(\bar{g}^{k\lambda}\bar{\bar{g}}_{\lambda\sigma}\bar{g}^{\sigma r})\nonumber\\
&\Big(\frac{\partial \bar{g}^{kr}}{\partial{x_l}}+\frac{\partial \bar{g}^{lr}}{\partial{x_k}}-\frac{\partial \bar{g}^{kl}}{\partial{x_r}}\Big)+\frac{1}{2}\bar{g}^{jl}\bar{g}^{k\lambda}\bar{\bar{g}}_{\lambda\sigma}\bar{g}^{\sigma r}\Big(\frac{\partial ^2 \bar{g}^{kr}}{\partial{x_j}\partial{x_l}}+\frac{\partial^2 \bar{g}^{lr}}{\partial{x_j}\partial{x_k}}-\frac{\partial ^2 \bar{g}^{kl}}{\partial{x_j}\partial{x_r}}\Big)+\bar{g}^{j\lambda}\bar{\bar{g}}_{\lambda\sigma}\bar{g}^{\sigma l}\bar{\Gamma}^\alpha_{kl}\bar{\Gamma}^k_{j\alpha}\nonumber\\
&-\frac{1}{2}\bar{g}^{jl}\Big[\bar{g}^{\alpha r}\Big(\frac{\partial \bar{\bar{g}}_{kr}}{\partial{x_l}}+\frac{\partial \bar{\bar{g}}_{l r}}{\partial{x_k}}-\frac{\partial \bar{\bar{g}}_{kl}}{\partial{x_r}}\Big)-\bar{g}^{\alpha\lambda}\bar{\bar{g}}_{\lambda\sigma}\bar{g}^{\sigma r}\Big(\frac{\partial \bar{g}^{kr}}{\partial{x_l}}+\frac{\partial \bar{g}^{l r}}{\partial{x_k}}-\frac{\partial \bar{g}^{kl}}{\partial{x_r}}\Big)\Big]\bar{\Gamma}^k_{j\alpha}\nonumber\\
&-\frac{1}{2}\bar{g}^{jl}\bar{\Gamma}^\alpha_{kl}\Big[\bar{g}^{k r}\Big(\frac{\partial \bar{\bar{g}}_{jr}}{\partial{x_\alpha}}+\frac{\partial \bar{\bar{g}}_{\alpha r}}{\partial{x_j}}-\frac{\partial \bar{\bar{g}}_{j\alpha}}{\partial{x_r}}\Big)-\bar{g}^{k\lambda}\bar{\bar{g}}_{\lambda\sigma}\bar{g}^{\sigma r}\Big(\frac{\partial \bar{g}^{jr}}{\partial{x_\alpha}}+\frac{\partial \bar{g}^{\alpha r}}{\partial{x_j}}-\frac{\partial \bar{g}^{j\alpha}}{\partial{x_r}}\Big)\Big];\nonumber\\
r_2&=\partial_k(\bar{\Gamma}^k_{jl})\bar{g}^{j\lambda}\bar{\bar{g}}_{\lambda\sigma}\bar{g}^{\sigma \alpha}\bar{\bar{g}}_{\alpha\beta}\bar{g}^{\beta l}-\frac{1}{2}\bar{g}^{jl}\partial_k(\bar{g}^{k\lambda}\bar{\bar{g}}_{\lambda \sigma}\bar{g}^{\sigma r})\Big(\frac{\partial \bar{\bar{g}}_{jr}}{\partial{x_l}}+\frac{\partial \bar{\bar{g}}_{lr}}{\partial{x_j}}-\frac{\partial \bar{\bar{g}}_{jl}}{\partial{x_r}}\Big)-\frac{1}{2}\bar{g}^{jl}\bar{g}^{k\lambda}\bar{\bar{g}}_{\lambda\sigma}\bar{g}^{\sigma r}\Big(\frac{\partial^2 \bar{\bar{g}}_{jr}}{\partial{x_k}\partial{x_l}}\nonumber\\
&+\frac{\partial ^2 \bar{\bar{g}}_{lr}}{\partial{x_k}\partial{x_j}}-\frac{\partial^2 \bar{\bar{g}}_{jl}}{\partial{x_k}\partial{x_r}}\Big)+\frac{1}{2}\bar{g}^{jl}\Big[\partial_k(\bar{g}^{k\lambda}\bar{\bar{g}}_{\lambda \sigma}\bar{g}^{\sigma \alpha}\bar{\bar{g}}_{\alpha\beta}\bar{g}^{\beta r})\Big(\frac{\partial \bar{g}^{jr}}{\partial{x_l}}+\frac{\partial \bar{g}^{lr}}{\partial{x_j}}-\frac{\partial \bar{g}^{jl}}{\partial{x_r}}\Big)+\bar{g}^{k\lambda}\bar{\bar{g}}_{\lambda \sigma}\bar{g}^{\sigma \alpha}\bar{\bar{g}}_{\alpha\beta}\bar{g}^{\beta r}\nonumber\\
&\Big(\frac{\partial^2 \bar{\bar{g}}_{jr}}{\partial{x_k}\partial{x_l}}+\frac{\partial ^2 \bar{\bar{g}}_{lr}}{\partial{x_k}\partial{x_j}}-\frac{\partial^2 \bar{\bar{g}}_{jl}}{\partial{x_k}\partial{x_r}}\Big)\Big]-\frac{1}{2}\bar{g}^{j\lambda}\bar{\bar{g}}_{\lambda \sigma}\bar{g}^{\sigma l}\Big[\partial_k(\bar{g}^{kr})\Big(\frac{\partial \bar{\bar{g}}_{jr}}{\partial{x_l}}+\frac{\partial \bar{\bar{g}}_{lr}}{\partial{x_j}}-\frac{\partial \bar{\bar{g}}_{jl}}{\partial{x_r}}\Big)+\bar{g}^{kr}\Big(\frac{\partial^2 \bar{\bar{g}}_{jr}}{\partial{x_k}\partial{x_l}}\nonumber\\
&+\frac{\partial ^2 \bar{\bar{g}}_{lr}}{\partial{x_k}\partial{x_j}}-\frac{\partial^2 \bar{\bar{g}}_{jl}}{\partial{x_k}\partial{x_r}}\Big)+\bar{g}_{jl}\partial_k(\bar{g}^{k\lambda}\bar{\bar{g}}_{\lambda \sigma}\bar{g}^{\sigma r})\Big(\frac{\partial \bar{g}^{jr}}{\partial{x_l}}+\frac{\partial \bar{g}^{lr}}{\partial{x_j}}-\frac{\partial \bar{g}^{jl}}{\partial{x_r}}\Big)-\bar{g}_{jl}\bar{g}^{k\lambda}\bar{\bar{g}}_{\lambda \sigma}\bar{g}^{\sigma r}\Big(\frac{\partial^2 \bar{\bar{g}}_{jr}}{\partial{x_k}\partial{x_l}}\nonumber\\
&+\frac{\partial ^2 \bar{\bar{g}}_{lr}}{\partial{x_k}\partial{x_j}}-\frac{\partial^2 \bar{\bar{g}}_{jl}}{\partial{x_k}\partial{x_r}}\Big)\Big]+\bar{g}^{j\lambda}\bar{\bar{g}}_{\lambda \sigma}\bar{g}^{\sigma \alpha}\bar{\bar{g}}_{\alpha\beta}\bar{g}^{\beta l}\bar{\Gamma}^\alpha_{jl}\bar{\Gamma}^k_{k\alpha}+\frac{1}{2}\bar{g}_{jl}\Big[-\bar{g}^{\alpha\lambda}\bar{\bar{g}}_{\lambda \sigma}\bar{g}^{\sigma r}\Big(\frac{\partial \bar{\bar{g}}_{jr}}{\partial{x_l}}+\frac{\partial \bar{\bar{g}}_{lr}}{\partial{x_j}}-\frac{\partial \bar{\bar{g}}_{jl}}{\partial{x_r}}\Big)\nonumber\\
&+\bar{g}^{\alpha\lambda}\bar{\bar{g}}_{\lambda \sigma}\bar{g}^{\sigma k}\bar{\bar{g}}_{k\beta}\bar{g}^{\beta r}\Big(\frac{\partial \bar{g}^{jr}}{\partial{x_l}}+\frac{\partial \bar{g}^{lr}}{\partial{x_j}}-\frac{\partial \bar{g}^{jl}}{\partial{x_r}}\Big)\Big]\bar{\Gamma}^k_{k\alpha}+\frac{1}{2}\bar{g}_{jl}\bar{\Gamma}^\alpha_{jl}\Big[-\bar{g}^{k\lambda}\bar{\bar{g}}_{\lambda \sigma}\bar{g}^{\sigma r}\Big(\frac{\partial \bar{\bar{g}}_{kr}}{\partial{x_l}}+\frac{\partial \bar{\bar{g}}_{lr}}{\partial{x_k}}-\frac{\partial \bar{\bar{g}}_{kl}}{\partial{x_r}}\Big)\nonumber\\
&+\bar{g}^{k\lambda}\bar{\bar{g}}_{\lambda \sigma}\bar{g}^{\sigma m}\bar{\bar{g}}_{m\beta}\bar{g}^{\beta r}\Big(\frac{\partial \bar{g}^{kr}}{\partial{x_l}}+\frac{\partial \bar{g}^{lr}}{\partial{x_k}}-\frac{\partial \bar{g}^{kl}}{\partial{x_r}}\Big)\Big]+\frac{1}{4}\bar{g}_{jl}\Big[\bar{g}^{k r}\Big(\frac{\partial \bar{\bar{g}}_{kr}}{\partial{x_l}}+\frac{\partial \bar{\bar{g}}_{lr}}{\partial{x_k}}-\frac{\partial \bar{\bar{g}}_{kl}}{\partial{x_r}}\Big)-\bar{g}^{k\lambda}\bar{\bar{g}}_{\lambda \sigma}\bar{g}^{\sigma r}\nonumber\\
&\Big(\frac{\partial \bar{g}^{kr}}{\partial{x_l}}+\frac{\partial \bar{g}^{lr}}{\partial{x_k}}-\frac{\partial \bar{g}^{kl}}{\partial{x_r}}\Big)\Big]\Big[\bar{g}^{\alpha r}\Big(\frac{\partial \bar{\bar{g}}_{jr}}{\partial{x_l}}+\frac{\partial \bar{\bar{g}}_{lr}}{\partial{x_j}}-\frac{\partial \bar{\bar{g}}_{jl}}{\partial{x_r}}\Big)-\bar{g}^{\alpha\lambda}\bar{\bar{g}}_{\lambda \sigma}\bar{g}^{\sigma r}\Big(\frac{\partial \bar{g}^{jr}}{\partial{x_l}}+\frac{\partial \bar{g}^{lr}}{\partial{x_j}}-\frac{\partial \bar{g}^{jl}}{\partial{x_r}}\Big)\Big]-\frac{1}{2}\bar{g}_{j\lambda}\bar{\bar{g}}_{\lambda \sigma}\nonumber\\
&\bar{g}^{\sigma l}\bar{\Gamma}^\alpha_{jl}\Big[\bar{g}^{\alpha r}\Big(\frac{\partial \bar{\bar{g}}_{jr}}{\partial{x_l}}+\frac{\partial \bar{\bar{g}}_{lr}}{\partial{x_j}}-\frac{\partial \bar{\bar{g}}_{jl}}{\partial{x_r}}\Big)-\bar{g}^{\alpha\lambda}\bar{\bar{g}}_{\lambda \sigma}\bar{g}^{\sigma r}\Big(\frac{\partial \bar{g}^{jr}}{\partial{x_l}}+\frac{\partial \bar{g}^{lr}}{\partial{x_j}}-\frac{\partial \bar{g}^{jl}}{\partial{x_r}}\Big)\Big]-\frac{1}{2}\bar{g}_{j\lambda}\bar{\bar{g}}_{\lambda \sigma}\bar{g}^{\sigma l}\Big[\bar{g}^{k r}\Big(\frac{\partial \bar{\bar{g}}_{kr}}{\partial{x_l}}+\frac{\partial \bar{\bar{g}}_{lr}}{\partial{x_k}}\nonumber\\
&-\frac{\partial \bar{\bar{g}}_{kl}}{\partial{x_r}}\Big)-\bar{g}^{k\lambda}\bar{\bar{g}}_{\lambda \sigma}\bar{g}^{\sigma r}\Big(\frac{\partial \bar{g}^{kr}}{\partial{x_l}}
+\frac{\partial \bar{g}^{lr}}{\partial{x_k}}-\frac{\partial \bar{g}^{kl}}{\partial{x_r}}\Big)\Big]\bar{\Gamma}^k_{k\alpha}+\frac{1}{2}\bar{g}^{jl}\partial_k(\bar{g}^{k\lambda}\bar{\bar{g}}_{\lambda \sigma}\bar{g}^{\sigma r})\Big(\frac{\partial \bar{\bar{g}}_{kr}}{\partial{x_l}}+\frac{\partial \bar{\bar{g}}_{lr}}{\partial{x_k}}-\frac{\partial \bar{\bar{g}}_{kl}}{\partial{x_r}}\Big)+\frac{1}{2}\bar{g}^{jl}\nonumber\\
&\bar{g}^{k\lambda}\bar{\bar{g}}_{\lambda\sigma}\bar{g}^{\sigma r}\Big(\frac{\partial^2 \bar{\bar{g}}_{kr}}{\partial{x_j}\partial{x_l}}+\frac{\partial ^2 \bar{\bar{g}}_{lr}}{\partial{x_j}\partial{x_k}}-\frac{\partial^2 \bar{\bar{g}}_{kl}}{\partial{x_j}\partial{x_r}}\Big)-\frac{1}{2}\bar{g}^{jl}\partial_k(\bar{g}^{k\lambda}\bar{\bar{g}}_{\lambda \sigma}\bar{g}^{\sigma \alpha}\bar{\bar{g}}_{\alpha\beta}\bar{g}^{\beta r})\Big(\frac{\partial \bar{g}^{kr}}{\partial{x_l}}+\frac{\partial \bar{g}^{lr}}{\partial{x_k}}-\frac{\partial \bar{g}^{kl}}{\partial{x_r}}\Big)-\frac{1}{2}\bar{g}^{jl}\nonumber\\
&\bar{g}^{k\lambda}\bar{\bar{g}}_{\lambda \sigma}\bar{g}^{\sigma \alpha}\bar{\bar{g}}_{\alpha\beta}\bar{g}^{\beta r}\Big(\frac{\partial^2 \bar{g}^{kr}}{\partial{x_j}\partial{x_l}}+\frac{\partial ^2 \bar{g}^{lr}}{\partial{x_j}\partial{x_k}}-\frac{\partial^2 \bar{g}^{kl}}{\partial{x_j}\partial{x_r}}\Big)-\bar{g}^{j\lambda}\bar{\bar{g}}_{\lambda \sigma}\bar{g}^{\sigma \alpha}\bar{\bar{g}}_{\alpha\beta}\bar{g}^{\beta l}\partial_j(\bar{\Gamma}^k_{kl})+\frac{1}{2}\bar{g}_{j\lambda}\bar{\bar{g}}_{\lambda \sigma}\bar{g}^{\sigma l}\Big[\partial_j(\bar{g}^{k r})\nonumber\\
&\Big(\frac{\partial \bar{\bar{g}}_{kr}}{\partial{x_l}}+\frac{\partial \bar{\bar{g}}_{lr}}{\partial{x_k}}-\frac{\partial \bar{\bar{g}}_{kl}}{\partial{x_r}}\Big)+\bar{g}^{k r}\frac{\partial^2 \bar{\bar{g}}_{kr}}{\partial{x_j}\partial{x_l}}+\frac{\partial ^2 \bar{\bar{g}}_{lr}}{\partial{x_j}\partial{x_k}}-\frac{\partial^2 \bar{\bar{g}}_{kl}}{\partial{x_j}\partial{x_r}}\Big)-\partial_j(\bar{g}^{k\lambda}\bar{\bar{g}}_{\lambda \sigma}\bar{g}^{\sigma r})\Big(\frac{\partial \bar{g}^{kr}}{\partial{x_l}}+\frac{\partial \bar{g}^{lr}}{\partial{x_k}}-\frac{\partial \bar{g}^{kl}}{\partial{x_r}}\Big)\nonumber\\
&-\bar{g}^{k\lambda}\bar{\bar{g}}_{\lambda \sigma}\bar{g}^{\sigma r}\Big(\frac{\partial^2 \bar{g}^{kr}}{\partial{x_j}\partial{x_l}}+\frac{\partial ^2 \bar{g}^{lr}}{\partial{x_j}\partial{x_k}}-\frac{\partial^2 \bar{g}^{kl}}{\partial{x_j}\partial{x_r}}\Big)\Big]-\bar{g}^{j\lambda}\bar{\bar{g}}_{\lambda \sigma}\bar{g}^{\sigma \alpha}\bar{\bar{g}}_{\alpha\beta}\bar{g}^{\beta l}\bar{\Gamma}^\alpha_{kl}\bar{\Gamma}^k_{j\alpha}-\frac{1}{2}\bar{g}_{jl}\bar{\Gamma}^\alpha_{kl}\Big[-\bar{g}^{k\lambda}\bar{\bar{g}}_{\lambda \sigma}\bar{g}^{\sigma r}\nonumber\\
&\Big(\frac{\partial \bar{\bar{g}}_{jr}}{\partial{x_l}}+\frac{\partial \bar{\bar{g}}_{lr}}{\partial{x_j}}-\frac{\partial \bar{\bar{g}}_{jl}}{\partial{x_r}}\Big)+\bar{g}^{\alpha\lambda}\bar{\bar{g}}_{\lambda \sigma}\bar{g}^{\sigma m}\bar{\bar{g}}_{m\beta}\bar{g}^{\beta r}\Big(\frac{\partial \bar{g}^{kr}}{\partial{x_l}}+\frac{\partial \bar{g}^{lr}}{\partial{x_k}}-\frac{\partial \bar{g}^{kl}}{\partial{x_r}}\Big)\Big]-\frac{1}{2}\bar{g}_{jl}\Big[-\bar{g}^{\alpha\lambda}\bar{\bar{g}}_{\lambda \sigma}\bar{g}^{\sigma r}\Big(\frac{\partial \bar{\bar{g}}_{kr}}{\partial{x_l}}+\frac{\partial \bar{\bar{g}}_{lr}}{\partial{x_k}}\nonumber\\
&-\frac{\partial \bar{\bar{g}}_{kl}}{\partial{x_r}}\Big)+\bar{g}^{\alpha\lambda}\bar{\bar{g}}_{\lambda \sigma}\bar{g}^{\sigma m}\bar{\bar{g}}_{m\beta}\bar{g}^{\beta r}\Big(\frac{\partial \bar{g}^{kr}}{\partial{x_l}}+\frac{\partial \bar{g}^{lr}}{\partial{x_k}}-\frac{\partial \bar{g}^{kl}}{\partial{x_r}}\Big)\Big]\bar{\Gamma}^k_{kl}+\frac{1}{4}\bar{g}_{jl}\Big[\bar{g}^{\alpha r}\Big(\frac{\partial \bar{\bar{g}}_{kr}}{\partial{x_l}}+\frac{\partial \bar{\bar{g}}_{lr}}{\partial{x_k}}-\frac{\partial \bar{\bar{g}}_{kl}}{\partial{x_r}}\Big)-\bar{g}^{\alpha\lambda}\bar{\bar{g}}_{\lambda \sigma}\nonumber\\
&\bar{g}^{\sigma r}\Big(\frac{\partial \bar{g}^{jr}}{\partial{x_l}}+\frac{\partial \bar{g}^{lr}}{\partial{x_j}}-\frac{\partial \bar{g}^{jl}}{\partial{x_r}}\Big)\Big]\Big[\bar{g}^{k r}\Big(\frac{\partial \bar{\bar{g}}_{jr}}{\partial{x_l}}+\frac{\partial \bar{\bar{g}}_{lr}}{\partial{x_j}}-\frac{\partial \bar{\bar{g}}_{jl}}{\partial{x_r}}\Big)-\bar{g}^{k\lambda}\bar{\bar{g}}_{\lambda \sigma}\bar{g}^{\sigma r}\Big(\frac{\partial \bar{g}^{jr}}{\partial{x_l}}+\frac{\partial \bar{g}^{lr}}{\partial{x_j}}-\frac{\partial \bar{g}^{jl}}{\partial{x_r}}\Big)\Big]+\frac{1}{2}\bar{g}_{j\lambda}\bar{\bar{g}}_{\lambda \sigma}\nonumber\\
&\bar{g}^{\sigma l}\bar{\Gamma}^\alpha_{kl}\Big[\bar{g}^{k r}\Big(\frac{\partial \bar{\bar{g}}_{jr}}{\partial{x_l}}+\frac{\partial \bar{\bar{g}}_{lr}}{\partial{x_j}}-\frac{\partial \bar{\bar{g}}_{jl}}{\partial{x_r}}\Big)-\bar{g}^{k\lambda}\bar{\bar{g}}_{\lambda \sigma}\bar{g}^{\sigma r}\Big(\frac{\partial \bar{g}^{jr}}{\partial{x_l}}+\frac{\partial \bar{g}^{lr}}{\partial{x_j}}-\frac{\partial \bar{g}^{jl}}{\partial{x_r}}\Big)\Big]+\frac{1}{2}\bar{g}_{j\lambda}\bar{\bar{g}}_{\lambda \sigma}\bar{g}^{\sigma l}\Big[\bar{g}^{\alpha r}\Big(\frac{\partial \bar{\bar{g}}_{kr}}{\partial{x_l}}+\frac{\partial \bar{\bar{g}}_{lr}}{\partial{x_k}}\nonumber\\
&-\frac{\partial \bar{\bar{g}}_{kl}}{\partial{x_r}}\Big)-\bar{g}^{\alpha\lambda}\bar{\bar{g}}_{\lambda \sigma}\bar{g}^{\sigma r}\Big(\frac{\partial \bar{g}^{kr}}{\partial{x_l}}+\frac{\partial \bar{g}^{lr}}{\partial{x_k}}-\frac{\partial \bar{g}^{kl}}{\partial{x_r}}\Big)\Big]\bar{\Gamma}^k_{j\alpha}.
\end{align}

$\mathbf{(2)}$The coefficients of $\varepsilon^0,\varepsilon,\varepsilon^2$ in $\Delta<df_1,df_2>$:
\begin{align}\label{b2}
a_0&=\Big[-\sum_{\alpha\beta}\bar{g}^{\alpha\beta}(x)(\partial_\alpha\partial_\beta-\sum_k\bar{\Gamma}^k_{\alpha\beta}\partial_k)\Big]\Big(\sum_{jl}\frac{\partial f_1}{\partial x_j}\frac{\partial f_2}{\partial x_l}\bar{g}^{jl}\Big)
;\nonumber\\
a_1&=\sum_{\alpha\beta}\bar{g}^{\alpha\beta}(x)\partial_\alpha\partial_\beta\Big(\frac{\partial f_1}{\partial x_j}\frac{\partial f_2}{\partial x_l}\bar{g}^{j\lambda}\bar{\bar{g}}_{\lambda\sigma}\bar{g}^{\sigma l}\Big)-\sum_{\alpha\beta}\bar{g}^{\alpha\beta}(x)\sum_k\bar{\Gamma}^k_{\alpha\beta}\partial_k\Big(\frac{\partial f_1}{\partial x_j}\frac{\partial f_2}{\partial x_l}\bar{g}^{j\lambda}\bar{\bar{g}}_{\lambda\sigma}\bar{g}^{\sigma l}\Big)\nonumber\\
&+\frac{1}{2}\sum_{\alpha\beta}\bar{g}^{\alpha\beta}(x)\sum_k\Big[\bar{g}^{kl}\Big(\frac{\partial \bar{\bar{g}}_{\alpha l}}{\partial{x_\beta}}+\frac{\partial \bar{\bar{g}}_{\beta l}}{\partial{x_\alpha}}-\frac{\partial \bar{\bar{g}}_{\alpha\beta}}{\partial{x_l}}\Big)-\bar{g}^{k\lambda}\bar{\bar{g}}_{\lambda\sigma}\bar{g}^{\sigma l}\Big(\frac{\partial \bar{g}^{\alpha l}}{\partial{x_\beta}}+\frac{\partial \bar{g}^{\beta l}}{\partial{x_\alpha}}-\frac{\partial \bar{g}^{\alpha\beta}}{\partial{x_l}}\Big)\Big]\partial_k\Big(\frac{\partial f_1}{\partial x_j}\frac{\partial f_2}{\partial x_l}\bar{g}^{jl}\Big)\nonumber\\
&+\sum_{\alpha\beta}\bar{g}^{\alpha\lambda}\bar{\bar{g}}_{\lambda\sigma}\bar{g}^{\sigma \beta}\partial_\alpha\partial_\beta\Big(\frac{\partial f_1}{\partial x_j}\frac{\partial f_2}{\partial x_l}\bar{g}^{jl}\Big)-\sum_{\alpha\beta}\bar{g}^{\alpha\lambda}\bar{\bar{g}}_{\lambda\sigma}\bar{g}^{\sigma \beta}\sum_k\bar{\Gamma}^k_{\alpha\beta}\partial_k\Big(\frac{\partial f_1}{\partial x_j}\frac{\partial f_2}{\partial x_l}\bar{g}^{jl}\Big);\nonumber\\
a_2&=\sum_{\alpha\beta}\bar{g}^{\alpha\lambda}\bar{\bar{g}}_{\lambda\sigma}\bar{g}^{\sigma l}\bar{\bar{g}}_{lj}\bar{g}^{j\beta}\partial_\alpha\partial_\beta\Big(\frac{\partial f_1}{\partial x_j}\frac{\partial f_2}{\partial x_l}\bar{g}^{jl}\Big)+\sum_{\alpha\beta}\bar{g}^{\alpha\beta}(x)\partial_\alpha\partial_\beta\Big(\frac{\partial f_1}{\partial x_j}\frac{\partial f_2}{\partial x_l}\bar{g}^{j\lambda}\bar{\bar{g}}_{\lambda\sigma}\bar{g}^{\sigma \alpha}\bar{\bar{g}}_{\alpha\beta}\bar{g}^{\beta l}\Big)-\sum_{\alpha\beta}\bar{g}^{\alpha\lambda}\bar{\bar{g}}_{\lambda\sigma}\bar{g}^{\sigma l}\nonumber\\
&\bar{\bar{g}}_{lj}\bar{g}^{j\beta}\sum_k\bar{\Gamma}^k_{\alpha\beta}\partial_k\Big(\frac{\partial f_1}{\partial x_j}\frac{\partial f_2}{\partial x_l}\bar{g}^{jl}\Big)-\sum_{\alpha\beta}\bar{g}^{\alpha\beta}(x)\sum_k\bar{\Gamma}^k_{\alpha\beta}\partial_k\Big(\frac{\partial f_1}{\partial x_j}\frac{\partial f_2}{\partial x_l}\bar{g}^{j\lambda}\bar{\bar{g}}_{\lambda\sigma}\bar{g}^{\sigma \alpha}\bar{\bar{g}}_{\alpha\beta}\bar{g}^{\beta l}\Big)+\frac{1}{2}\bar{g}^{\alpha\beta}(x)\sum_k\Big[-\bar{g}^{k\lambda}\bar{\bar{g}}_{\lambda\sigma}\nonumber\\
&\bar{g}^{\sigma l}\Big(\frac{\partial \bar{\bar{g}}_{\alpha l}}{\partial{x_\beta}}+\frac{\partial \bar{\bar{g}}_{\beta l}}{\partial{x_\alpha}}-\frac{\partial \bar{\bar{g}}_{\alpha\beta}}{\partial{x_l}}\Big)+\bar{g}^{k\lambda}\bar{\bar{g}}_{\lambda\sigma}\bar{g}^{\sigma i}\bar{\bar{g}}_{ij}\bar{g}^{j l}\Big(\frac{\partial \bar{g}^{\alpha l}}{\partial{x_\beta}}+\frac{\partial\bar{g}^{\beta l}}{\partial{x_\alpha}}-\frac{\partial \bar{g}^{\alpha\beta}}{\partial{x_l}}\Big)\Big]\partial_k\Big(\frac{\partial f_1}{\partial x_j}\frac{\partial f_2}{\partial x_l}\bar{g}^{jl}\Big)-\sum_{\alpha\beta}\bar{g}^{\alpha\lambda}\bar{\bar{g}}_{\lambda\sigma}\bar{g}^{\sigma \beta}\nonumber\\
&\partial_\alpha\partial_\beta\Big(\frac{\partial f_1}{\partial x_j}\frac{\partial f_2}{\partial x_l}\bar{g}^{j\lambda}\bar{\bar{g}}_{\lambda\sigma}\bar{g}^{\sigma l}\Big)-\sum_{\alpha\beta}\bar{g}^{\alpha\lambda}\bar{\bar{g}}_{\lambda\sigma}\bar{g}^{\sigma \beta}\sum_k\bar{\Gamma}^k_{\alpha\beta}\partial_k\Big(\frac{\partial f_1}{\partial x_j}\frac{\partial f_2}{\partial x_l}\bar{g}^{j\lambda}\bar{\bar{g}}_{\lambda\sigma}\bar{g}^{\sigma l}\Big)+\frac{1}{2}\sum_{\alpha\beta}\bar{g}^{\alpha\lambda}\bar{\bar{g}}_{\lambda\sigma}\bar{g}^{\sigma \beta}\sum_k\Big[\bar{g}^{kl}\nonumber\\
&\Big(\frac{\partial \bar{\bar{g}}_{\alpha l}}{\partial{x_\beta}}+\frac{\partial \bar{\bar{g}}_{\beta l}}{\partial{x_\alpha}}-\frac{\partial \bar{\bar{g}}_{\alpha\beta}}{\partial{x_l}}\Big)-\bar{g}^{k\lambda}\bar{\bar{g}}_{\lambda\sigma}\bar{g}^{\sigma l}\Big(\frac{\partial \bar{g}^{\alpha l}}{\partial{x_\beta}}+\frac{\partial\bar{g}^{\beta l}}{\partial{x_\alpha}}-\frac{\partial \bar{g}^{\alpha\beta}}{\partial{x_l}}\Big)\Big]\partial_k\Big(\frac{\partial f_1}{\partial x_j}\frac{\partial f_2}{\partial x_l}\bar{g}^{jl}\Big)+\frac{1}{2}\sum_{\alpha\beta}\bar{g}^{\alpha\beta}(x)\nonumber\\
&\sum_k\Big[\bar{g}^{kl}\Big(\frac{\partial \bar{\bar{g}}_{\alpha l}}{\partial{x_\beta}}+\frac{\partial \bar{\bar{g}}_{\beta l}}{\partial{x_\alpha}}-\frac{\partial \bar{\bar{g}}_{\alpha\beta}}{\partial{x_l}}\Big)-\bar{g}^{k\lambda}\bar{\bar{g}}_{\lambda\sigma}\bar{g}^{\sigma l}\Big(\frac{\partial \bar{g}^{\alpha l}}{\partial{x_\beta}}+\frac{\partial\bar{g}^{\beta l}}{\partial{x_\alpha}}-\frac{\partial \bar{g}^{\alpha\beta}}{\partial{x_l}}\Big)\Big]\partial_k\Big(\frac{\partial f_1}{\partial x_j}\frac{\partial f_2}{\partial x_l}\bar{g}^{\alpha\lambda}\bar{\bar{g}}_{\lambda\sigma}\bar{g}^{\sigma l}\Big).
\end{align}

$\mathbf{(3)}$The coefficients of $\varepsilon^0,\varepsilon,\varepsilon^2$ in $<\nabla f_1,\nabla f_2>$:
\begin{align}\label{b3}
b_0&=\sum_{l\beta pq}\Big(\frac{\partial^2f_1}{\partial x_l\partial x_\beta}-\sum_k\frac{\partial f_1}{\partial x_k}\bar{\Gamma}^k_{\beta l}\Big)\Big(\frac{\partial^2f_2}{\partial x_p\partial x_q}-\sum_m\frac{\partial f_2}{\partial x_m}\bar{\Gamma}^m_{pq}\Big)\bar{g}^{\beta p}\bar{g}^{lq};\nonumber\\
b_1&=-\sum_{l\beta pq}\frac{\partial^2f_1}{\partial x_l\partial x_\beta}\frac{\partial^2f_2}{\partial x_p\partial x_q}\Big(\bar{g}^{\beta\lambda}\bar{\bar{g}}_{\lambda\sigma}\bar{g}^{\sigma p}\bar{g}^{lq}+\bar{g}^{\beta p}\bar{g}^{l\lambda}\bar{\bar{g}}_{\lambda\sigma}\bar{g}^{\sigma q}\Big)+\sum_{l\beta pq}\frac{\partial^2f_1}{\partial x_l\partial x_\beta}\sum_m\frac{\partial f_2}{\partial x_m}\bar{\Gamma}^m_{pq}\Big(\bar{g}^{\beta\lambda}\bar{\bar{g}}_{\lambda\sigma}\bar{g}^{\sigma p}\bar{g}^{lq}\nonumber\\
&+\bar{g}^{\beta p}\bar{g}^{l\lambda}\bar{\bar{g}}_{\lambda\sigma}\bar{g}^{\sigma q}\Big)+\sum_{l\beta pq}\sum_k\frac{\partial f_1}{\partial x_k}\bar{\Gamma}^k_{\beta l}\frac{\partial^2f_2}{\partial x_p\partial x_q}\Big(\bar{g}^{\beta\lambda}\bar{\bar{g}}_{\lambda\sigma}\bar{g}^{\sigma p}\bar{g}^{lq}+\bar{g}^{\beta p}\bar{g}^{l\lambda}\bar{\bar{g}}_{\lambda\sigma}\bar{g}^{\sigma q}\Big)+\sum_{l\beta pq}\sum_k\frac{\partial f_1}{\partial x_k}\bar{\Gamma}^k_{\beta l}\sum_m\frac{\partial f_2}{\partial x_m}\bar{\Gamma}^m_{pq}\nonumber\\
&\Big(\bar{g}^{\beta\lambda}\bar{\bar{g}}_{\lambda\sigma}\bar{g}^{\sigma p}\bar{g}^{lq}+\bar{g}^{\beta p}\bar{g}^{l\lambda}\bar{\bar{g}}_{\lambda\sigma}\bar{g}^{\sigma q}\Big)-\frac{1}{2}\sum_{l\beta pq}\frac{\partial^2f_1}{\partial x_l\partial x_\beta}\sum_m\frac{\partial f_2}{\partial x_m}\Big[\bar{g}^{ml}\Big(\frac{\partial \bar{\bar{g}}_{p l}}{\partial{x_q}}+\frac{\partial \bar{\bar{g}}_{q l}}{\partial{x_p}}-\frac{\partial \bar{\bar{g}}_{pq}}{\partial{x_l}}\Big)-\bar{g}^{m\lambda}\bar{\bar{g}}_{\lambda\sigma}\bar{g}^{\sigma l}\nonumber\\
&\Big(\frac{\partial \bar{g}^{p l}}{\partial{x_q}}+\frac{\partial\bar{g}^{q l}}{\partial{x_p}}-\frac{\partial \bar{g}^{pq}}{\partial{x_l}}\Big)\Big]\bar{g}^{\beta p}\bar{g}^{lq}-\frac{1}{2}\sum_{l\beta pq}\sum_k\frac{\partial f_1}{\partial x_k}\Big[\bar{g}^{kj}\Big(\frac{\partial \bar{\bar{g}}_{\beta j}}{\partial{x_l}}+\frac{\partial \bar{\bar{g}}_{lj}}{\partial{x_\beta}}-\frac{\partial \bar{\bar{g}}_{\beta l}}{\partial{x_j}}\Big)-\bar{g}^{k\lambda}\bar{\bar{g}}_{\lambda\sigma}\bar{g}^{\sigma j}\Big(\frac{\partial \bar{g}^{\beta j}}{\partial{x_l}}+\frac{\partial\bar{g}^{lj}}{\partial{x_\beta}}\nonumber\\
&-\frac{\partial \bar{g}^{\beta l}}{\partial{x_j}}\Big)\Big]\frac{\partial^2f_2}{\partial x_p\partial x_q}\bar{g}^{\beta p}\bar{g}^{lq}+\frac{1}{2}\sum_{l\beta pq}\sum_k\frac{\partial f_1}{\partial x_k}\bar{\Gamma}^k_{\beta l}\sum_m\frac{\partial f_2}{\partial x_m}\Big[\bar{g}^{ml}\Big(\frac{\partial \bar{\bar{g}}_{p l}}{\partial{x_q}}+\frac{\partial \bar{\bar{g}}_{q l}}{\partial{x_p}}-\frac{\partial \bar{\bar{g}}_{pq}}{\partial{x_l}}\Big)-\bar{g}^{m\lambda}\bar{\bar{g}}_{\lambda\sigma}\bar{g}^{\sigma l}\Big(\frac{\partial \bar{g}^{p l}}{\partial{x_q}}\nonumber\\
&+\frac{\partial\bar{g}^{q l}}{\partial{x_p}}-\frac{\partial \bar{g}^{pq}}{\partial{x_l}}\Big)\Big]\bar{g}^{\beta p}\bar{g}^{lq}+\frac{1}{2}\sum_{l\beta pq}\sum_k\frac{\partial f_1}{\partial x_k}\Big[\bar{g}^{kj}\Big(\frac{\partial \bar{\bar{g}}_{\beta j}}{\partial{x_l}}+\frac{\partial \bar{\bar{g}}_{lj}}{\partial{x_\beta}}-\frac{\partial \bar{\bar{g}}_{\beta l}}{\partial{x_j}}\Big)-\bar{g}^{k\lambda}\bar{\bar{g}}_{\lambda\sigma}\bar{g}^{\sigma j}\Big(\frac{\partial \bar{g}^{\beta j}}{\partial{x_l}}+\frac{\partial\bar{g}^{lj}}{\partial{x_\beta}}-\frac{\partial \bar{g}^{\beta l}}{\partial{x_j}}\Big)\Big]\nonumber\\
&\frac{\partial^2f_2}{\partial x_p\partial x_q}\sum_m\frac{\partial f_2}{\partial x_m}\bar{\Gamma}^m_{pq}\bar{g}^{\beta p}\bar{g}^{lq};\nonumber\\
b_2&=\sum_{l\beta pq}\Big(\frac{\partial^2f_1}{\partial x_l\partial x_\beta}-\sum_k\frac{\partial f_1}{\partial x_k}\bar{\Gamma}^k_{\beta l}\Big)\Big(\frac{\partial^2f_2}{\partial x_p\partial x_q}-\sum_m\frac{\partial f_2}{\partial x_m}\bar{\Gamma}^m_{pq}\Big)\bar{g}^{\beta\lambda}\bar{\bar{g}}_{\lambda\sigma}\bar{g}^{\sigma \alpha}\bar{\bar{g}}_{\alpha\gamma}\bar{g}^{\gamma p}\bar{g}^{lq}+\sum_{l\beta pq}\Big(\frac{\partial^2f_1}{\partial x_l\partial x_\beta}-\sum_k\frac{\partial f_1}{\partial x_k}\nonumber\\
&\bar{\Gamma}^k_{\beta l}\Big)\Big(\frac{\partial^2f_2}{\partial x_p\partial x_q}-\sum_m\frac{\partial f_2}{\partial x_m}\bar{\Gamma}^m_{pq}\Big)\bar{g}^{\beta p}\bar{g}^{l\lambda}\bar{\bar{g}}_{\lambda\sigma}\bar{g}^{\sigma \alpha}\bar{\bar{g}}_{\alpha\gamma}\bar{g}^{\gamma q}-\frac{1}{2}\sum_{l\beta pq}\sum_k\frac{\partial f_1}{\partial x_k}\Big[-\bar{g}^{k\lambda}\bar{\bar{g}}_{\lambda\sigma}\bar{g}^{\sigma j}\Big(\frac{\partial \bar{\bar{g}}_{\beta j}}{\partial{x_l}}+\frac{\partial \bar{\bar{g}}_{lj}}{\partial{x_\beta}}-\frac{\partial \bar{\bar{g}}_{\beta l}}{\partial{x_j}}\Big)\nonumber\\
&+\bar{g}^{k\lambda}\bar{\bar{g}}_{\lambda\sigma}\bar{g}^{\sigma \alpha}\bar{\bar{g}}_{\alpha\gamma}\bar{g}^{\gamma j}\Big(\frac{\partial \bar{g}^{p j}}{\partial{x_l}}+\frac{\partial\bar{g}^{lj}}{\partial{x_\beta}}-\frac{\partial \bar{g}^{l\beta}}{\partial{x_j}}\Big)\Big]\Big(\frac{\partial^2f_2}{\partial x_p\partial x_q}-\sum_m\frac{\partial f_2}{\partial x_m}\bar{\Gamma}^m_{pq}\Big)\bar{g}^{\beta p}\bar{g}^{lq}-\frac{1}{2}\sum_{l\beta pq}\Big(\frac{\partial^2f_1}{\partial x_l\partial x_\beta}-\sum_k\frac{\partial f_1}{\partial x_k}\nonumber\\
&\bar{\Gamma}^k_{\beta l}\Big)\sum_m\frac{\partial f_2}{\partial x_m}\Big[-\bar{g}^{m\lambda}\bar{\bar{g}}_{\lambda\sigma}\bar{g}^{\sigma l}\Big(\frac{\partial \bar{\bar{g}}_{p l}}{\partial{x_q}}+\frac{\partial \bar{\bar{g}}_{ql}}{\partial{x_p}}-\frac{\partial \bar{\bar{g}}_{pq}}{\partial{x_l}}\Big)+\bar{g}^{m\lambda}\bar{\bar{g}}_{\lambda\sigma}\bar{g}^{\sigma \alpha}\bar{\bar{g}}_{\alpha\gamma}\bar{g}^{\gamma l}\Big(\frac{\partial \bar{g}^{p l}}{\partial{x_q}}+\frac{\partial\bar{g}^{ql}}{\partial{x_p}}-\frac{\partial \bar{g}^{pq}}{\partial{x_l}}\Big)\Big]\bar{g}^{\beta p}\bar{g}^{lq}\nonumber\\
&+\sum_{l\beta pq}\Big(\frac{\partial^2f_1}{\partial x_l\partial x_\beta}-\sum_k\frac{\partial f_1}{\partial x_k}\bar{\Gamma}^k_{\beta l}\Big)\Big(\frac{\partial^2f_2}{\partial x_p\partial x_q}-\sum_m\frac{\partial f_2}{\partial x_m}\bar{\Gamma}^m_{pq}\Big)\bar{g}^{\beta\lambda}\bar{\bar{g}}_{\lambda\sigma}\bar{g}^{\sigma p}\bar{g}^{l\lambda}\bar{\bar{g}}_{\lambda\sigma}\bar{g}^{\sigma q}+\frac{1}{2}\sum_{l\beta pq}\sum_k\frac{\partial f_1}{\partial x_k}\Big[\bar{g}^{kj}\Big(\frac{\partial \bar{\bar{g}}_{\beta j}}{\partial{x_l}}\nonumber\\
&+\frac{\partial \bar{\bar{g}}_{lj}}{\partial{x_\beta}}-\frac{\partial \bar{\bar{g}}_{\beta l}}{\partial{x_j}}\Big)-\bar{g}^{k\lambda}\bar{\bar{g}}_{\lambda\sigma}\bar{g}^{\sigma j}\Big(\frac{\partial \bar{g}^{\beta j}}{\partial{x_l}}+\frac{\partial\bar{g}^{lj}}{\partial{x_\beta}}-\frac{\partial \bar{g}^{\beta l}}{\partial{x_j}}\Big)\Big]\Big(\frac{\partial^2f_2}{\partial x_p\partial x_q}-\sum_m\frac{\partial f_2}{\partial x_m}\bar{\Gamma}^m_{pq}\Big)\Big(\bar{g}^{\beta\lambda}\bar{\bar{g}}_{\lambda\sigma}\bar{g}^{\sigma p}\bar{g}^{lq}+\bar{g}^{\beta p}\bar{g}^{l\lambda}\nonumber\\
&\bar{\bar{g}}_{\lambda\sigma}\bar{g}^{\sigma q}\Big)+\frac{1}{2}\Big(\frac{\partial^2f_1}{\partial x_l\partial x_\beta}-\sum_{l\beta pq}\sum_k\frac{\partial f_1}{\partial x_k}\bar{\Gamma}^k_{\beta l}\Big)\sum_m\frac{\partial f_2}{\partial x_m}\Big[\bar{g}^{ml}\Big(\frac{\partial \bar{\bar{g}}_{p l}}{\partial{x_q}}+\frac{\partial \bar{\bar{g}}_{q l}}{\partial{x_p}}-\frac{\partial \bar{\bar{g}}_{pq}}{\partial{x_l}}\Big)-\bar{g}^{m\lambda}\bar{\bar{g}}_{\lambda\sigma}\bar{g}^{\sigma l}\Big(\frac{\partial \bar{g}^{p l}}{\partial{x_q}}+\frac{\partial\bar{g}^{q l}}{\partial{x_p}}\nonumber\\
&-\frac{\partial \bar{g}^{pq}}{\partial{x_l}}\Big)\Big]\Big(\bar{g}^{\beta\lambda}\bar{\bar{g}}_{\lambda\sigma}\bar{g}^{\sigma p}\bar{g}^{lq}+\bar{g}^{\beta p}\bar{g}^{l\lambda}\bar{\bar{g}}_{\lambda\sigma}\bar{g}^{\sigma q}\Big)+\frac{1}{4}\sum_{l\beta pq}\sum_k\frac{\partial f_1}{\partial x_k}\Big[\bar{g}^{kj}\Big(\frac{\partial \bar{\bar{g}}_{\beta j}}{\partial{x_l}}+\frac{\partial \bar{\bar{g}}_{lj}}{\partial{x_\beta}}-\frac{\partial \bar{\bar{g}}_{\beta l}}{\partial{x_j}}\Big)-\bar{g}^{k\lambda}\bar{\bar{g}}_{\lambda\sigma}\bar{g}^{\sigma j}\Big(\frac{\partial \bar{g}^{\beta j}}{\partial{x_l}}\nonumber\\
&+\frac{\partial\bar{g}^{lj}}{\partial{x_\beta}}-\frac{\partial \bar{g}^{\beta l}}{\partial{x_j}}\Big)\Big]\sum_m\frac{\partial f_2}{\partial x_m}\Big[\bar{g}^{ml}\Big(\frac{\partial \bar{\bar{g}}_{p l}}{\partial{x_q}}+\frac{\partial \bar{\bar{g}}_{q l}}{\partial{x_p}}-\frac{\partial \bar{\bar{g}}_{pq}}{\partial{x_l}}\Big)-\bar{g}^{m\lambda}\bar{\bar{g}}_{\lambda\sigma}\bar{g}^{\sigma l}\Big(\frac{\partial \bar{g}^{p l}}{\partial{x_q}}+\frac{\partial\bar{g}^{q l}}{\partial{x_p}}-\frac{\partial \bar{g}^{pq}}{\partial{x_l}}\Big)\Big]\bar{g}^{\beta p}\bar{g}^{lq}.
\end{align}

$\mathbf{(4)}$The coefficients of $\varepsilon^0,\varepsilon,\varepsilon^2$ in $\Delta f_1\Delta f_2$:
\begin{align}\label{b4}
d_0&=\Big[\sum_{\alpha\beta}\bar{g}^{\alpha\beta}(x)\Big(\partial_\alpha\partial_\beta-\sum_l\bar{\Gamma}^l_{\alpha\beta}\partial_l\Big)(f_1)\Big]\Big[\sum_{pq}\bar{g}^{pq}(x)\Big(\partial_p\partial_q-\sum_r\bar{\Gamma}^r_{pq}\partial_r\Big)(f_2)\Big] ;\nonumber\\
d_1&=\Big[-\sum_{\alpha\beta}\bar{g}^{\alpha\lambda}\bar{\bar{g}}_{\lambda\sigma}\bar{g}^{\sigma \beta}\Big(\partial_\alpha\partial_\beta-\sum_l\bar{\Gamma}^l_{\alpha\beta}\partial_l\Big)(f_1)\Big]\Big[\sum_{pq}\bar{g}^{pq}(x)\Big(\partial_p\partial_q-\sum_r\bar{\Gamma}^r_{pq}\partial_r\Big)(f_2)\Big]+\Big[-\sum_{\alpha\beta}\bar{g}^{\alpha\beta}(x)\nonumber\\
&\Big(\partial_\alpha\partial_\beta-\sum_l\bar{\Gamma}^l_{\alpha\beta}\partial_l\Big)(f_1)\Big]\Big[\sum_{pq}\bar{g}^{p\lambda}\bar{\bar{g}}_{\lambda\sigma}\bar{g}^{\sigma q}\Big(\partial_p\partial_q-\sum_r\bar{\Gamma}^r_{pq}\partial_r\Big)(f_2)\Big] -\frac{1}{2}\Big[\sum_{\alpha\beta}\bar{g}^{\alpha\beta}(x)\Big(\partial_\alpha\partial_\beta-\sum_l\bar{\Gamma}^l_{\alpha\beta}\partial_l\Big)(f_1)\Big]\nonumber\\
&\sum_{pq}\bar{g}^{pq}(x)\Big[\bar{g}^{kr}\Big(\frac{\partial \bar{\bar{g}}_{pk}}{\partial{x_q}}+\frac{\partial \bar{\bar{g}}_{qk}}{\partial{x_p}}-\frac{\partial \bar{\bar{g}}_{pq}}{\partial{x_k}}\Big)-\bar{g}^{r\lambda}\bar{\bar{g}}_{\lambda\sigma}\bar{g}^{\sigma k}\Big(\frac{\partial \bar{g}^{pk}}{\partial{x_q}}+\frac{\partial\bar{g}^{qk}}{\partial{x_p}}-\frac{\partial \bar{g}^{pq}}{\partial{x_k}}\Big)\Big]\partial_r(f_2)-\frac{1}{2}\sum_{\alpha\beta}\bar{g}^{\alpha\beta}(x)\sum_l\Big[\bar{g}^{lk}\nonumber\\
&\Big(\frac{\partial \bar{\bar{g}}_{\alpha k}}{\partial{x_\beta}}+\frac{\partial \bar{\bar{g}}_{\beta k}}{\partial{x_\alpha}}-\frac{\partial \bar{\bar{g}}_{\alpha\beta}}{\partial{x_k}}\Big)-\bar{g}^{l\lambda}\bar{\bar{g}}_{\lambda\sigma}\bar{g}^{\sigma k}\Big(\frac{\partial \bar{g}^{lk}}{\partial{x_\beta}}+\frac{\partial\bar{g}^{\beta k}}{\partial{x_l}}-\frac{\partial \bar{g}^{l\beta}}{\partial{x_k}}\Big)\Big]\partial_l(f_1)\Big]\Big[\sum_{pq}\bar{g}^{pq}(x)\Big(\partial_p\partial_q-\sum_r\bar{\Gamma}^r_{pq}\partial_r\Big)(f_2)\Big] ;\nonumber\\
d_2&=\Big[\sum_{\alpha\beta}\bar{g}^{\alpha\lambda}\bar{\bar{g}}_{\lambda\sigma}\bar{g}^{\sigma \gamma}\bar{\bar{g}}_{\gamma m}\bar{g}^{m\beta}\Big(\partial_\alpha\partial_\beta-\sum_l\bar{\Gamma}^l_{\alpha\beta}\partial_l\Big)(f_1)\Big]\Big[\sum_{pq}\bar{g}^{pq}(x)\Big(\partial_p\partial_q-\sum_r\bar{\Gamma}^r_{pq}\partial_r\Big)(f_2)\Big]+\Big[\sum_{\alpha\beta}\bar{g}^{\alpha\beta}(x)\nonumber\\
&\Big(\partial_\alpha\partial_\beta-\sum_l\bar{\Gamma}^l_{\alpha\beta}\partial_l\Big)(f_1)\Big]\Big[\sum_{pq}\bar{g}^{p\lambda}\bar{\bar{g}}_{\lambda\sigma}\bar{g}^{\sigma \gamma}\bar{\bar{g}}_{\gamma m}\bar{g}^{mq}\Big(\partial_p\partial_q-\sum_r\bar{\Gamma}^r_{pq}\partial_r\Big)(f_2)\Big]-\frac{1}{2}\sum_{\alpha\beta}\bar{g}^{\alpha\beta}(x)\sum_l\Big[-\bar{g}^{l\lambda}\nonumber\\
&\bar{\bar{g}}_{\lambda\sigma}\bar{g}^{\sigma k}\Big(\frac{\partial \bar{\bar{g}}_{\alpha k}}{\partial{x_\beta}}+\frac{\partial \bar{\bar{g}}_{\beta k}}{\partial{x_\alpha}}-\frac{\partial \bar{\bar{g}}_{\alpha\beta}}{\partial{x_k}}\Big)+\bar{g}^{l\lambda}\bar{\bar{g}}_{\lambda\sigma}\bar{g}^{\sigma \gamma}\bar{\bar{g}}_{\gamma m}\bar{g}^{m\beta}\Big(\frac{\partial \bar{g}^{\alpha k}}{\partial{x_\beta}}+\frac{\partial\bar{g}^{\beta k}}{\partial{x_\alpha}}-\frac{\partial \bar{g}^{\alpha\beta}}{\partial{x_k}}\Big)\Big]\partial_l(f_1)\Big[\sum_{pq}\bar{g}^{pq}(x)\Big(\partial_p\partial_q\nonumber\\
&-\sum_r\bar{\Gamma}^r_{pq}\partial_r\Big)(f_2)\Big]-\frac{1}{2}\Big[\sum_{\alpha\beta}\bar{g}^{\alpha\beta}(x)\Big(\partial_\alpha\partial_\beta-\sum_l\bar{\Gamma}^l_{\alpha\beta}\partial_l\Big)(f_1)\Big]\sum_{pq}\bar{g}^{pq}(x)\sum_r\Big[-\bar{g}^{r\lambda}\bar{\bar{g}}_{\lambda\sigma}\bar{g}^{\sigma k}\Big(\frac{\partial \bar{\bar{g}}_{p k}}{\partial{x_q}}+\frac{\partial \bar{\bar{g}}_{q k}}{\partial{x_p}}\nonumber\\
&-\frac{\partial \bar{\bar{g}}_{pq}}{\partial{x_k}}\Big)+\bar{g}^{r\lambda}\bar{\bar{g}}_{\lambda\sigma}\bar{g}^{\sigma \gamma}\bar{\bar{g}}_{\gamma m}\bar{g}^{mq}\Big(\frac{\partial \bar{g}^{pk}}{\partial{x_q}}+\frac{\partial\bar{g}^{q k}}{\partial{x_p}}-\frac{\partial \bar{g}^{pq}}{\partial{x_k}}\Big)\Big]\partial_r(f_2)+\frac{1}{2}\sum_{pq}\bar{g}^{\alpha\lambda}\bar{\bar{g}}_{\lambda\sigma}\bar{g}^{\sigma \beta}\sum_l\Big[\bar{g}^{lk}\Big(\frac{\partial \bar{\bar{g}}_{\alpha k}}{\partial{x_\beta}}+\frac{\partial \bar{\bar{g}}_{\beta k}}{\partial{x_\alpha}}\nonumber\\
&-\frac{\partial \bar{\bar{g}}_{\alpha\beta}}{\partial{x_k}}\Big)-\bar{g}^{l\lambda}\bar{\bar{g}}_{\lambda\sigma}\bar{g}^{\sigma k}\Big(\frac{\partial \bar{g}^{lk}}{\partial{x_\beta}}+\frac{\partial\bar{g}^{\beta k}}{\partial{x_l}}-\frac{\partial \bar{g}^{l\beta}}{\partial{x_k}}\Big)\Big]\partial_l(f_1)\Big[\sum_{pq}\bar{g}^{pq}(x)\Big(\partial_p\partial_q-\sum_r\bar{\Gamma}^r_{pq}\partial_r\Big)(f_2)\Big]+\frac{1}{2}\Big[\sum_{\alpha\beta}\bar{g}^{\alpha\beta}(x)\nonumber\\
&\Big(\partial_\alpha\partial_\beta-\sum_l\bar{\Gamma}^l_{\alpha\beta}\partial_l\Big)(f_1)\Big]\sum_{pq}\bar{g}^{p\lambda}\bar{\bar{g}}_{\lambda\sigma}\bar{g}^{\sigma q}\sum_r\Big[\bar{g}^{kr}\Big(\frac{\partial \bar{\bar{g}}_{pk}}{\partial{x_q}}+\frac{\partial \bar{\bar{g}}_{qk}}{\partial{x_p}}-\frac{\partial \bar{\bar{g}}_{pq}}{\partial{x_k}}\Big)-\bar{g}^{r\lambda}\bar{\bar{g}}_{\lambda\sigma}\bar{g}^{\sigma k}\Big(\frac{\partial \bar{g}^{pk}}{\partial{x_q}}+\frac{\partial\bar{g}^{qk}}{\partial{x_p}}\nonumber\\
&-\frac{\partial \bar{g}^{pq}}{\partial{x_k}}\Big)\Big]\partial_r(f_2)+\Big[\sum_{\alpha\beta}\bar{g}^{\alpha\lambda}\bar{\bar{g}}_{\lambda\sigma}\bar{g}^{\sigma \beta}\Big(\partial_\alpha\partial_\beta-\sum_l\bar{\Gamma}^l_{\alpha\beta}\partial_l\Big)(f_1)\Big]\Big[\sum_{pq}\bar{g}^{p\lambda}\bar{\bar{g}}_{\lambda\sigma}\bar{g}^{\sigma q}\Big(\partial_p\partial_q-\sum_r\bar{\Gamma}^r_{pq}\partial_r\Big)(f_2)\Big]\nonumber\\
&+\frac{1}{2}\sum_{pq}\bar{g}^{\alpha\lambda}\bar{\bar{g}}_{\lambda\sigma}\bar{g}^{\sigma \beta}\Big(\partial_\alpha\partial_\beta-\sum_l\bar{\Gamma}^l_{\alpha\beta}\partial_l\Big)(f_1)\sum_{pq}\bar{g}^{pq}(x)\sum_r\Big[\bar{g}^{kr}\Big(\frac{\partial \bar{\bar{g}}_{pk}}{\partial{x_q}}+\frac{\partial \bar{\bar{g}}_{qk}}{\partial{x_p}}-\frac{\partial \bar{\bar{g}}_{pq}}{\partial{x_k}}\Big)-\bar{g}^{r\lambda}\bar{\bar{g}}_{\lambda\sigma}\bar{g}^{\sigma k}\nonumber\\
&\Big(\frac{\partial \bar{g}^{pk}}{\partial{x_q}}+\frac{\partial\bar{g}^{qk}}{\partial{x_p}}-\frac{\partial \bar{g}^{pq}}{\partial{x_k}}\Big)\Big]\partial_r(f_2)+\frac{1}{2}\sum_{\alpha\beta}\bar{g}^{\alpha\beta}(x)\sum_l\Big[\bar{g}^{lk}\Big(\frac{\partial \bar{\bar{g}}_{\alpha k}}{\partial{x_\beta}}+\frac{\partial \bar{\bar{g}}_{\beta k}}{\partial{x_\alpha}}-\frac{\partial \bar{\bar{g}}_{\alpha\beta}}{\partial{x_k}}\Big)-\bar{g}^{l\lambda}\bar{\bar{g}}_{\lambda\sigma}\bar{g}^{\sigma k}\Big(\frac{\partial \bar{g}^{lk}}{\partial{x_\beta}}\nonumber\\
&+\frac{\partial\bar{g}^{\beta k}}{\partial{x_l}}-\frac{\partial \bar{g}^{l\beta}}{\partial{x_k}}\Big)\Big] \Big[\sum_{pq}\bar{g}^{p\lambda}\bar{\bar{g}}_{\lambda\sigma}\bar{g}^{\sigma q}\Big(\partial_p\partial_q-\sum_r\bar{\Gamma}^r_{pq}\partial_r\Big)(f_2)\Big]+\frac{1}{4}\sum_{\alpha\beta}\bar{g}^{\alpha\beta}(x)\sum_l\Big[\bar{g}^{lk}\Big(\frac{\partial \bar{\bar{g}}_{\alpha k}}{\partial{x_\beta}}+\frac{\partial \bar{\bar{g}}_{\beta k}}{\partial{x_\alpha}}-\frac{\partial \bar{\bar{g}}_{\alpha\beta}}{\partial{x_k}}\Big)\nonumber\\
&-\bar{g}^{l\lambda}\bar{\bar{g}}_{\lambda\sigma}\bar{g}^{\sigma k}\Big(\frac{\partial \bar{g}^{lk}}{\partial{x_\beta}}+\frac{\partial\bar{g}^{\beta k}}{\partial{x_l}}-\frac{\partial \bar{g}^{l\beta}}{\partial{x_k}}\Big)\Big]\sum_{pq}\bar{g}^{pq}(x)\sum_r\Big[-\bar{g}^{r\lambda}\bar{\bar{g}}_{\lambda\sigma}\bar{g}^{\sigma k}\Big(\frac{\partial \bar{\bar{g}}_{p k}}{\partial{x_q}}+\frac{\partial \bar{\bar{g}}_{q k}}{\partial{x_p}}-\frac{\partial \bar{\bar{g}}_{pq}}{\partial{x_k}}\Big)+\bar{g}^{r\lambda}\bar{\bar{g}}_{\lambda\sigma}\bar{g}^{\sigma \gamma}\nonumber\\
&\bar{\bar{g}}_{\gamma m}\bar{g}^{mq}\Big(\frac{\partial \bar{g}^{pk}}{\partial{x_q}}+\frac{\partial\bar{g}^{q k}}{\partial{x_p}}-\frac{\partial \bar{g}^{pq}}{\partial{x_k}}\Big)\Big]\partial_r(f_2).
\end{align}

\section*{ Declarations}
\textbf{Ethics approval and consent to participate:} Not applicable.

\textbf{Consent for publication:} Not applicable.

\textbf{Availability of data and materials:} The authors confrm that the data supporting the fndings of this study are available within the article.

\textbf{Competing interests:} The authors declare no competing interests.

\textbf{Funding:} This research was funded by National Natural Science Foundation of China: No.11771070, 2023-BSBA-118 and N2405015..

\textbf{Author Contributions:} All authors contributed to the study conception and design. Material preparation,
data collection and analysis were performed by TW and YW. The frst draft of the manuscript was written
by TW and all authors commented on previous versions of the manuscript. All authors read and approved
the final manuscript.
\section*{Acknowledgements}
This work was supported by NSFC No.11771070, 2023-BSBA-118 and N2405015. The authors thank the referee for his (or her) careful reading and helpful comments.

\section*{References}

\end{document}